\documentclass[11pt]{article}
\usepackage[utf8]{inputenc}

\usepackage{geometry} 
\geometry{hmargin=3.5cm,vmargin=4cm}
\usepackage{amssymb}
\usepackage{amsmath}
\usepackage{amsthm}
\usepackage{enumerate}
\usepackage{graphicx, wrapfig}
\usepackage{fullpage}
\usepackage{caption}
\usepackage{subcaption}
\usepackage{lipsum}
\usepackage{algorithm}
\usepackage{algorithmic}
\usepackage[hidelinks]{hyperref}
\usepackage{color}

\title{Monostability and bistability of biological switches
}

\author{Jules Guilberteau
\thanks{Sorbonne Universit\'e and Universit\'e de Paris, CNRS, Inria, Laboratoire Jacques-Louis Lions (LJLL), F-75005 Paris, France. {\tt\small guilberteau@ljll.math.upmc.fr}}, 
Camille Pouchol
\thanks{Universit\'e de Paris, FP2M, CNRS FR 2036, MAP5 UMR 8145, F-75006 Paris,
France. 
         {\tt\small camille.pouchol@u-paris.fr}}
\ and 
Nastassia Pouradier Duteil
\thanks{ 
       Sorbonne Universit\'e and Universit\'e de Paris, Inria, CNRS, Laboratoire Jacques-Louis Lions (LJLL), F-75005 Paris, France.  {\tt\small nastassia.pouradier\_duteil@sorbonne-universite.fr}}
}

\date{}
\newtheorem{theorem}{Theorem}
\newtheorem{prop}{Proposition}
\newtheorem{cor}[prop]{Corollary}
\newtheorem{lem}{Lemma}
\newtheorem*{theorem*}{Theorem}

\theoremstyle{definition}
\newtheorem{definition}{Definition}

\theoremstyle{definition}
\newtheorem*{rem}{Remark}
\newtheorem*{ex}{Example}

\newcommand{\R}{\mathbb{R}}

\setlength{\intextsep}{0pt} 
\setlength{\textfloatsep}{0pt}

\begin{document}

\maketitle


\begin{abstract}

Cell-fate transition can be modeled by ordinary differential equations (ODEs) which describe the behavior of several molecules in interaction, and for which each stable equilibrium corresponds to a possible phenotype (or `biological trait'). In this paper, we focus on simple ODE systems modeling two molecules which each negatively (or positively) regulate the other. 
It is well-known that such models may lead to monostability or multistability, depending on the selected parameters.
However, extensive numerical simulations have led systems biologists to conjecture that in the vast majority of cases, there cannot be more than two stable points.
 Our main result is a proof of this conjecture.
 More specifically, we provide a criterion ensuring at most bistability, which is indeed satisfied by most commonly used functions.
This includes Hill functions, but also a wide family of convex and sigmoid functions. 
We also determine which parameters lead to monostability, and which lead to bistability, by developing a more general framework encompassing all our results.

\end{abstract}

\section{Introduction}













A same cell environment may lead to different cell-fate decisions. In most cases, it is considered that the phenotype adopted by a cell is determined by the concentration of several molecules in interaction \cite{guantes2008multistable}.  It is now well documented that such `biological switches' can be accurately modeled by multistable ordinary differential equations (ODEs), where each stable state represents a possible phenotype \cite{Thomas2002laws}. 

These models have  been widely used in order to describe different cellular processes such as the epithelial-mesenchymal transition (EMT) \cite{bracken2008double, johnston2005micrornas, siemens2011mir}, hematopoietic stem cells \cite{huang2007bifurcation, laslo2006multilineage, roeder2006towards},  embryonic stem cells \cite{chickarmane2006transcriptional} or other cell-fate differentiation phenomena involved in \emph{Xenopus}  \cite{ferrell1998biochemical, novak1993numerical}, \emph{Drosophila} \cite{papatsenko2011drosophila}  or \emph{Escherichia coli} \cite{gardner2000construction, li2020transition, ozbudak2004multistability}. 

The development of a relevant ODE model hence benefits from a priori knowledge of the possible number of stable states, and how this number evolves in the parameter space. As an example, the epithelial-mesenchymal transition phenomenon involves three different phenotypes, and it is thus crucial to be able to determine minimal conditions allowing the system to be tristable~\cite{lu2013microrna}.  

A general theoretical answer to finding the number of stable states is certainly out of reach for high-dimensional ODEs with a large number of parameters. 
Understanding the more simple building blocks of these complex models, however, remains of paramount importance, even more so with the advent of synthetically-built switches where, to some extent, the model may be chosen and kept simple~\cite{gardner2000construction}.

A widely used starting ODE model writes as follows
\begin{align}
\begin{cases}
\dot x =\alpha f(y)-x\\
\dot y=\beta g(x)-y,
\end{cases}
\quad f', g'<0 \ \mathrm{or} \ f', g'>0
\label{system}
\end{align}
where $x$ and $y$ stand for the (normalized) concentrations of the two molecules ($A$ and $B$ on Figure~\ref{Competitive/Cooperative schemes}), $\alpha>0$ and $\beta>0$ their synthesis rates. Here, $f$ and $g$ are two monotonic functions which model the interactions between these two molecules, and are both strictly increasing or strictly decreasing,  depending on whether the system is cooperative or competitive. A classical choice for $f$ and $g$ are Hill functions, \textit{i.e.}, functions of the form
\[x\mapsto \frac{1}{1+x^r},\]
with $r \geq 1$.
More generally, molecule interactions are usually considered  to behave sigmoidally~\cite{Thomas2002laws, thomas1990biological}.

\begin{figure}[h]
\centering
\includegraphics[width=0.6\textwidth]{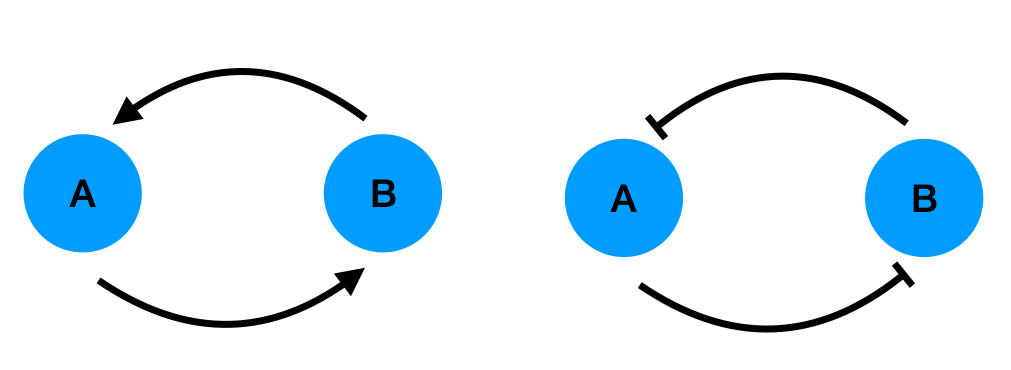}
\caption{Schematic representations of system \eqref{system}: the arrows represent activation ($f$ and $g$ increasing) and the bar inhibition ($f$ and $g$ decreasing). Under the hypothesis of Theorem \ref{theo intro}, we prove that such systems are either monostable or bistable. When bistability holds, the system on the left (cooperative system) has two stable points corresponding to (high A/ high B) and (low A/ low B), while the one on the right (cooperative system) leads to (high A/ low B) and (low A/ high B) stable equilibria.}
\label{Competitive/Cooperative schemes}
\end{figure}
\paragraph{State of the art.}
The seminal paper of Cherry and Adler~\cite{cherry2000make} is the main breakthrough towards understanding  when multistability occurs for such models. Under the condition
\[\underset{y>0}{\sup}\biggl(\left\lvert\frac{yf'(y)}{f(y)}\right\rvert \biggr) \; \underset{x>0}{\sup}\biggl(\left\lvert\frac{xg'(x)}{g(x)}\right\rvert \biggr)>1,\]
it is proven that there exist parameters $\alpha$ and $\beta$ such that system \eqref{system} is multistable. When applied to Hill functions, this shows that multistability will occur for some parameters $\alpha$, $\beta$ whenever $r \in \mathbb{N},\, r \geq 2$ for $f$ or $g$.
Interestingly, the authors noted that the sigmoid shape of $f$ and $g$ is not a necessary condition for bistability.


The numerical investigation of systems such as~\eqref{system} suggests that they are in fact always either monostable or bistable. 
This has led some authors to claim that self-regulation is required in order to get a tristable ODE, \textit{i.e.}, at least one of the cells must have a positive feedback on itself \cite{jia2017operating, gardner2000construction, macia2009cellular}. 

Up to our knowledge, this conjecture of at most bistability is yet to be proven. Moreover, for given functions $f$  and $g$, determining the exact set of parameters $(\alpha, \beta)$ for which this system is monostable or bistable remains difficult without the help of numerical simulations. 
Let us mention the very recent paper~\cite{li2020transition} which, by means of direct computations, solves the specific case of $f : x\mapsto \frac{1}{1+x}$ and $g : x\mapsto \frac{1}{1+x^n}$.


In the present work, we therefore address the following two key questions.
\begin{itemize}
\item Under which conditions is system \eqref{system} at most bistable? 
\item For given functions $f$ and $g$, which parameters $\alpha$, $\beta$ lead to monostability, and which ones lead to bistability?
\end{itemize}

A natural way to answer the first question is to note that a point $(\bar x, \bar y)\in \mathbb{R}_+$ is an equilibrium of~\eqref{system} if and only if 
\begin{align*}
\begin{cases}
\alpha f(\beta g(\bar x))=\bar x\\
\bar y = \beta g(\bar x)
\end{cases}.
\end{align*}
Hence, studying the equilibria of \eqref{system} is equivalent to studying the fixed points of $x\mapsto \alpha f(\beta g(x))$. The main difficulty lies in the fact that, even if $f$ and $g$ are two `simple' functions, there is no reason for $x\mapsto \alpha f(\beta g(x))$ to be as well.  As an example, with Hill functions of integer orders $n$ and $m$,  determining the fixed points of $x\mapsto \alpha f(\beta g(x))$ is equivalent to investigating the positive roots of a polynomial of degree $nm+1$, which proves to be difficult as soon as $nm\geq3$. 

\paragraph{Main results.}
Working around this difficulty, our main result is a simple and general result ensuring at most bistability.
\begin{theorem}
If the functions
\[\frac{1}{ \sqrt{\lvert f' \rvert }} \quad \text{and} \quad \frac{1}{\sqrt{\lvert g' \rvert}}\]
are strictly convex, then, for any $\alpha, \beta >0$, system \eqref{system} has at most three equilibria, among which at most two stable equilibria. 
\label{theo intro}
\end{theorem}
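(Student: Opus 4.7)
The plan is to reduce the problem to a one-dimensional fixed-point count via the map
$$h(x) := \alpha f(\beta g(x)),$$
whose fixed points are exactly the $x$-coordinates of the equilibria (as noted in the excerpt). Under both sign conventions on $f',g'$ we have $h'(x) = \alpha\beta f'(\beta g(x))\, g'(x) > 0$, so $h$ is $C^2$ (indeed $C^3$) and strictly increasing.

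The key observation is that the hypothesis ``$1/\sqrt{|f'|}$ strictly convex'' is exactly the condition that $f$ has strictly negative Schwarzian derivative
$$Sf := \frac{f'''}{f'} - \frac{3}{2}\left(\frac{f''}{f'}\right)^{\!2} < 0,$$
as one checks by twice differentiating $|f'|^{-1/2}$. Using the composition rule
$$S(\varphi\circ\psi)(x) = \bigl(Sf\bigr)\bigl(\psi(x)\bigr)\,(\psi'(x))^2 + \bigl(S\psi\bigr)(x),$$
together with the invariance of $S$ under affine post- and pre-composition, the hypotheses $Sf,\,Sg<0$ give $Sh<0$ on the whole domain.

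I would then use the classical consequence of negative Schwarzian: $h'$ cannot attain a positive local minimum. Indeed, at a critical point of $h'$ one has $h''=0$, so $Sh = h'''/h'$; since $h'>0$ and $Sh<0$, this forces $h'''<0$ at every critical point of $h'$. Hence every critical point of $h'$ is a strict local \emph{maximum}. A smooth function cannot have two strict local maxima with no local minimum in between, so $h'$ has at most one critical point, i.e.\ $h'$ is monotone or unimodal. In particular the equation $h'(x)=1$ has at most two solutions, and by Rolle's theorem applied to $h(x)-x$ we get at most three fixed points of $h$, hence at most three equilibria of \eqref{system}.

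For the stability count, the Jacobian at an equilibrium $(\bar x,\bar y)$ has trace $-2<0$ and determinant $1-\alpha\beta f'(\bar y)g'(\bar x) = 1-h'(\bar x)$, so the equilibrium is (asymptotically) stable iff $h'(\bar x)<1$. Ordering the fixed points as $\bar x_1<\bar x_2<\bar x_3$ (when three exist), Rolle forces a zero of $h'-1$ between consecutive $\bar x_i$, and since $h'$ is monotone or unimodal, the signs of $h'(\bar x_i)-1$ must alternate; at most two of the $\bar x_i$ can then satisfy $h'(\bar x_i)<1$. The step I expect to require the most care is the bookkeeping at degenerate equilibria where $h'(\bar x)=1$ (so the Jacobian determinant vanishes), but these sit at tangencies of $h$ with the diagonal and do not increase the count of \emph{strictly} stable equilibria.
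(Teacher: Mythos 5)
Your strategy is essentially the paper's: the observation that strict convexity of $1/\sqrt{\lvert f'\rvert}$ is a negative-Schwarzian condition is exactly the paper's notion of strict $\gamma^{1/2}$-convexity (Lemma \ref{lemma equivalence convexity}), your Schwarzian composition rule is Proposition \ref{compose}, and the Rolle/mean-value count of solutions of $h'=1$ between fixed points is the content of Propositions \ref{prop fixed_point_derivative} and \ref{Psi functions}. The skeleton is therefore sound, but two steps need tightening.

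First, strict convexity of a $C^2$ function only forces its second derivative to be positive on a \emph{dense} subset, not everywhere (consider $x\mapsto x^4$), so your hypothesis yields $Sh\leq 0$ everywhere and $Sh<0$ on a dense set, not $Sh<0$ pointwise. Your minimum-principle step ``at a critical point of $h'$ one has $h''=0$, and $Sh<0$ forces $h'''<0$'' can therefore fail at a critical point where $Sh$ vanishes. The fix is to drop the Schwarzian and use the convexity directly: $1/\sqrt{h'}$ is positive and strictly convex, hence attains the value $1$ at most twice, so $h'=1$ has at most two solutions; this is precisely how the paper argues, with $\Psi(t)=t^{-1/2}$ and the mean value theorem applied to pairs of fixed points. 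Second, your treatment of degenerate equilibria is an assertion rather than a proof: at a tangency of $h$ with the diagonal the Jacobian has a zero eigenvalue, and linearization alone does not rule out asymptotic stability. In fact the worry disappears if you push your own counting one step further: when there are three fixed points, Rolle already produces two zeros of $h'-1$ strictly between them, so $h'-1$ cannot also vanish at any of the fixed points; unimodality of $h'$ then forces $h'(\bar x_1)<1$, $h'(\bar x_2)>1$ and $h'(\bar x_3)<1$, making the middle equilibrium a genuine saddle. The paper instead sidesteps the Jacobian bookkeeping entirely: basins of attraction of asymptotically stable equilibria are open, and since every trajectory converges to an equilibrium, three stable equilibria would partition the connected set $\mathbb{R}_+^2$ into disjoint nonempty open sets, which is impossible.
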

Not only does this result apply to all classically-used functions we are aware of (including Hill and shifted Hill functions), it may easily be checked visually for more involved functions. We show how this result extends to specific cyclic $n$-dimensional ODEs, with the same hypotheses on the functions modeling the mutual regulations. 


Following an approach reminiscent of that of \cite{cherry2000make}, we go further and develop a general method for the identification of which parameters $\alpha$, $\beta$ lead to either monostability or bistability. For this purpose, we develop a framework yielding a condition under which system \eqref{system} has at most or at least $n$ equilibria, for all $n\in \mathbb{N}$.
The obtained criterion is not completely explicit, but the resulting formula  makes it numerically straightforward to check if some chosen parameters induce a monostable or a bistable system. We hence bypass any computationally-expensive grid-search through the parameter space. 
We show that the method in \cite{cherry2000make} corresponds to the case $n=1$ of this general framework.
With the same framework, Theorem~\ref{theo intro} corresponds to studying the case $n=2$. For higher values of $n$, however, we have not been able to apply this theoretical framework as the resulting computations prove to be too intricate. 

We also prove that when bistability occurs, the separatrix between the two basins of attraction is a one-dimensional curve. Note that our proof is implicit and does not provide a formula for the curve, unless some specific symmetry assumptions are made.


Taken together, our results show that system~\eqref{system} will generically lead to either one of the pictures of Figure~\ref{phase}, \textit{i.e.}, we fall into one of these two cases:
\begin{itemize}
\item system \eqref{system} is monostable, and all the solutions converge to the unique equilibrium,
\item system \eqref{system} is bistable,  and, in this case, the basins of attraction of the two stable points are separated by a one-dimensional separatrix which contains the unique other (unstable saddle) equilibrium point. 
\end{itemize}

\begin{figure}[H]
\begin{subfigure}[h!]{0.5\textwidth}
\includegraphics[width=\textwidth]{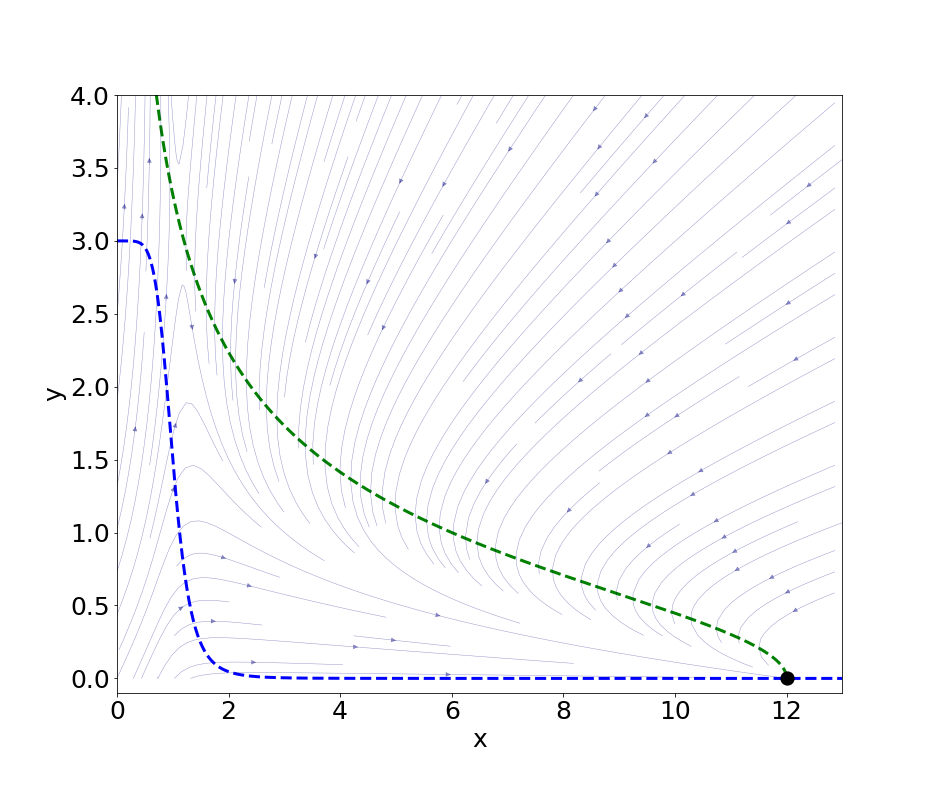}
\end{subfigure}
\begin{subfigure}[h!]{0.5\textwidth}
\includegraphics[width=\textwidth]{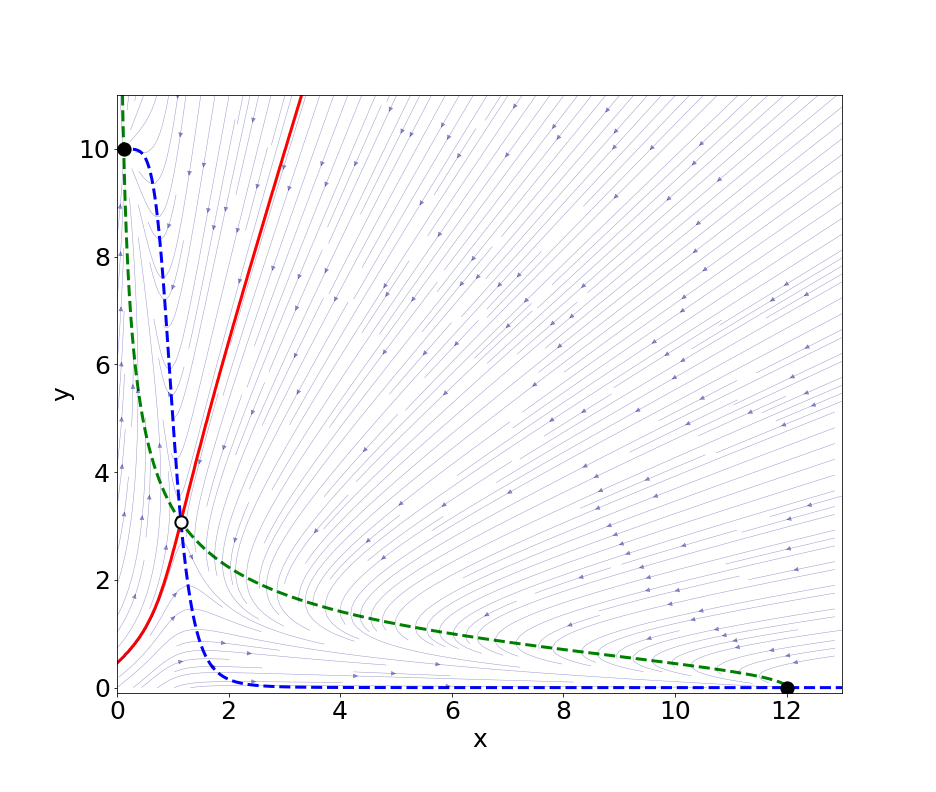}
\end{subfigure}
\caption{Typical phase planes for system~\eqref{system}.
In both simulations, we have taken $f:x\mapsto \frac{1}{1+x^2}$ and $g:x\mapsto \frac{1}{1+x^6}$, and $\alpha=10$. The value of $\beta$ is $3$ in the left diagram, and $12$ in the right one.}
\label{phase}
\end{figure}


\paragraph{Outline of the paper.} The paper is organized as follows. Section~\ref{Section2} is devoted to setting the mathematical framework and some general results which prove to be useful throughout. Next, we prove our main Theorem~\ref{theo intro}, in Section~\ref{Section3}, which involves defining an appropriate class of functions and studying it in detail.
We then turn our attention to finding the parameters  for which the system of interest is either monostable or bistable in Section~\ref{Section4}.
We present a generalizing framework, and then apply it to the cases of mono and bi-stability. We also compute the parameters for the cases of interest found in the literature, such as the toggle switch of~\cite{gardner2000construction}. 




\section{Preliminary results}
\label{Section2}
Throughout this article, we study systems of the form
\begin{equation}
\begin{cases}
\dot x = \alpha f(y)-x\\
\dot y=\beta g(x)-y,
\end{cases}
\label{initial system}
\end{equation}
starting from  an initial condition $(x_0, y_0)$ with $x_0 \geq 0, y_0\geq 0$.
Here
\begin{itemize}
\item $\alpha, \beta$ are two positive parameters,
\item  $f, g \in C^1(\mathbb{R}_+, \mathbb{R}_+)$ are two increasing or two decreasing functions, and at least one of these functions is bounded. 
\end{itemize}

If $f$ and $g$ are increasing, system \eqref{initial system} is called \emph{cooperative}. Examples of cooperative systems include two-species interactions that benefit both species, a kind of interaction often referred to as mutualism \cite{Vargas2012}.
On the other hand, if $f$ and $g$ are decreasing, system \eqref{initial system} is called \emph{competitive}. A simple example of a competitive system is the well-studied ``genetic switch'' of two proteins that each repress the synthesis of the other \cite{cherry2000make,jia2017operating}.

Without loss of generality, one can assume that $f,g>1$ on $\mathbb{R}_+$ (in the case where $f$ and $g$ are increasing) or $f,g<1$ on $\mathbb{R}_+$ (in the case where $f$ and $g$ are decreasing).
Under these conditions, since $f$ or $g$ is bounded, all the solutions of this ODE are bounded, regardless of its initial condition. It is well-known that any solution of such a system converges to an equilibrium point \cite{hirsch1982systems}.

Thus, the analysis of system \eqref{initial system} requires studying its equilibrium points. In this section, we begin by providing important results on how the equilibrium points are ordered, as well as on their basins of attraction.
These will be the starting point of our investigation of the system's multistability.

\subsection{Ordering of stable points}

Since the functions $f$ and $g$ are one-to-one, it is clear that  $(\bar x, \bar y)\in \mathbb{R}_+^2$ is an equilibrium point of \eqref{initial system} if and only if 
\begin{align*}
\begin{cases}
\alpha f(\beta g(\bar x))=\bar x\\
\bar y =\beta g(\bar x)
\end{cases}.
\end{align*}
Hence, the number of equilibria of \eqref{initial system} is equal to the number of fixed points of 
\[F : x\longmapsto \alpha f(\beta g(x)).\] Moreover, if \eqref{initial system} has a finite number of equilibria, that we denote $(x_1, y_1)$, $(x_2, y_2)$, ... $(x_n, y_n)$, with $x_1<x_2<...<x_n$, then
\begin{itemize}
\item $y_1<y_2<...<y_n$ if system \eqref{initial system} is cooperative (\textit{i.e.} if $f$ and $g$ are increasing);
\item $y_1>y_2>...>y_n$ if system \eqref{initial system} is competitive (\textit{i.e.} if $f$ and $g$ are decreasing). 
\end{itemize}

Let us give the single notion of stability of an equilibrium point that we shall make use of throughout.
\begin{definition}
We say that an equilibrium point $(\bar x, \bar y$) is asymptotically stable if there exists a neighborhood $U$ of  $(\bar x, \bar y$) such that if $(x_0, y_0) \in U$, the trajectory $(x(t), y(t))$ starting from $(x_0, y_0)$ satisfies \[\lim_{t \rightarrow+\infty} (x(t),y(t)) = (\bar x, \bar y).\]
\end{definition}
\noindent
We will abusively refer to \textit{stable equilibrium points} when dealing with asymptotic stable points.

From the applicative point of view, in particular, the ordering of equilibrium points means the following. If \eqref{initial system} is bistable, the two stable points will be of the type (low $x$/ low $y$) and (high $x$/ high $y$) if the system is cooperative, and of the type (low $x$/ high $y$) and (high $x$/ low $y$) if it is competitive (see Figure \ref{phase}). 

We also observe that for any $(x,y)\in \mathbb{R}_+$, the Jacobian matrix of the right-hand side of this ODE in $(x,y)$ is 
\begin{align*}
J_{(x,y)}=
\begin{pmatrix}
-1 & \alpha f'(y)\\
\beta g'(x) & -1
\end{pmatrix}.
\end{align*}
Since $\mathrm{Tr}(J_{(x,y)})=-2$ and $\mathrm{det}(J_{(x,y)})=1-\alpha \beta f'(y)g'(x)$, a fixed point $(\bar x, \bar y)$ is stable if $\alpha \beta f'(\bar y)g'(\bar x)<1$ and unstable if $\alpha \beta f'(\bar y)g'(\bar x)>1$. In other words, under the hypothesis ``for any fixed point $(\bar x, \bar y)$,  $\alpha\beta f'(\bar y)g'(\bar x)\neq 1$'', the number of stable equilibria of \eqref{initial system} is equal to the number of time $F$ crosses the identity line `from above', and the number of unstable equilibria to the number of times $F$ crosses the identity line `from below'. Therefore, since $F$ is positive, increasing and bounded, system $\eqref{initial system}$ has $d$ stable equilibria if and only if it has $d-1$ unstable equilibria. This result is proven rigorously in the next section. 


\subsection{Basins of attraction}

Due to the particular shape of the system that we study, we have a precise result regarding the basins of attraction: if the system is monostable, then all solutions converge to the stable point, meaning that it is globally asymptotically stable. If it is bistable, then the two basins of attractions are separated by a separatrix, which is the curve of an increasing function if \eqref{initial system} is competitive and of a decreasing function if \eqref{initial system} is cooperative (see Figure \ref{phase}).  
This result relies on the following proposition.
\begin{prop}
Let us consider the ODE system 
\begin{align}
\begin{cases}
\dot x = F(y)-x\\
\dot y =G(x)-y
\end{cases},
\label{system without parameters}
\end{align}
where $F$ and $G$ are either both increasing or both decreasing, and at least one of them is bounded.  
We assume that there exists $(\bar x_u$, $\bar y_u) \in \mathbb{R}_+^2$ an equilibrium point of \eqref{system without parameters} such that 
\[G'(\bar x_u)F'(\bar y_u)>1\]
Then, the basin of attraction of $(\bar x_u$, $\bar y_u)$ has measure zero in $\mathbb{R}_+^2$. More precisely, this basin of attraction is included in a curve of the shape
\[\{(x, \gamma(x)), x\in (a, b)\},\]
where $a\geq 0$,  $b\in \mathbb{R}_+\cup\{+\infty\}$ and $\gamma : (a, b) \rightarrow \mathbb{R}_+$ is a continuous function. This function is decreasing if $F$ and $G$ are increasing, and increasing if $F$ and $G$ are decreasing. 

\label{separatrix}

\end{prop}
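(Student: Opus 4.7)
The plan is to exploit the fact that the hypothesis $G'(\bar x_u) F'(\bar y_u) > 1$ forces $(\bar x_u,\bar y_u)$ to be a hyperbolic saddle. Indeed, the Jacobian of the right-hand side of \eqref{system without parameters} at $(\bar x_u,\bar y_u)$ has trace $-2$ and determinant $1 - F'(\bar y_u)G'(\bar x_u)<0$, hence real eigenvalues $\lambda_\pm = -1 \pm \sqrt{F'(\bar y_u)G'(\bar x_u)}$ of opposite signs. By the stable manifold theorem, the set of initial data whose trajectory converges to $(\bar x_u,\bar y_u)$ coincides with the one-dimensional $C^1$ embedded stable manifold $W^s$, which is automatically a connected curve of measure zero. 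A direct computation of the $\lambda_-$-eigenvector shows that $W^s$ is tangent at the saddle to a line whose slope is $-\sqrt{G'(\bar x_u)/F'(\bar y_u)}$-like, giving a \emph{negative} slope in the cooperative case and a \emph{positive} slope in the competitive case, which matches the statement locally.

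To upgrade this local picture to the claimed global graph, the plan is to use the monotonicity of the flow $\Phi_t$. In the cooperative case, $\Phi_t$ is strongly order-preserving for the usual componentwise order $\leq$ on $\mathbb{R}_+^2$; in the competitive case, it is strongly monotone for the Kamke-type order $(x_1,y_1)\leq_K (x_2,y_2) \iff x_1\leq x_2$ and $y_1\geq y_2$. Both facts are standard and follow immediately from the signs of $F'$ and $G'$ together with the form $\dot x=F(y)-x$, $\dot y=G(x)-y$.

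The central step, and the main obstacle, is to establish the following \emph{incomparability property}: no two distinct points of the basin of $(\bar x_u,\bar y_u)$ are comparable in the relevant order. Take for instance the cooperative case and suppose $p_1<p_2$ (componentwise, strictly in at least one coordinate) both lie in the basin. Setting $\delta(t)=\Phi_t(p_2)-\Phi_t(p_1)$, strong monotonicity yields $\delta(t)\in(0,+\infty)^2$ for every $t>0$, while $\delta(t)\to 0$ because both trajectories converge to the saddle. A careful asymptotic analysis, either via Hartman--Grobman or by writing $\dot\delta = A(t)\delta$ with $A(t)\to J_{(\bar x_u,\bar y_u)}$ and invoking results on asymptotically autonomous linear systems, shows that $\delta(t)/\|\delta(t)\|$ must align with the stable eigendirection of $J$; but that direction has components of opposite signs, contradicting $\delta(t)\in(0,+\infty)^2$. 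The competitive case is identical after replacing $\leq$ by $\leq_K$.

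Once incomparability is established, the conclusion is immediate: for any two distinct basin points with $x_1<x_2$, we must have $y_1>y_2$ in the cooperative case and $y_1<y_2$ in the competitive case. The basin is thus the graph of a strictly monotone function $\gamma$ defined on some set $I\subset \mathbb{R}_+$, and since it equals the connected one-dimensional curve $W^s$, its projection $I$ is an interval $(a,b)$ and $\gamma$ is continuous (a strictly monotone function whose graph is connected in $\mathbb{R}^2$ is automatically continuous). This yields exactly the claim. A leaner alternative for the third step would be to invoke directly Hirsch's theorems on planar (co)monotone flows, but the asymptotic linearization argument above is the cleanest self-contained route.
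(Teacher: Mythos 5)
Your argument is correct in outline but follows a genuinely different route from the paper's. The paper never invokes the stable manifold theorem: it proves the incomparability property directly, by applying the comparison principle to get $w_1=x_u-x>0$, $w_2=y-y_u>0$, bounding the difference quotients $\frac{F(y_u)-F(y)}{y-y_u}>c$ and $\frac{G(x)-G(x_u)}{x_u-x}>d$ with $cd>1$ for large times (this is where $F'(\bar y_u)G'(\bar x_u)>1$ enters), and then exhibiting the explicit linear functional $W=(1+d)w_1+(1+c)w_2$, which satisfies $\dot W\geq (cd-1)(w_1+w_2)>0$ eventually and therefore cannot tend to zero. This is completely elementary and self-contained, and the graph structure, monotonicity and continuity of $\gamma$ are then read off from incomparability plus continuity of solutions. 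Your version front-loads the saddle structure and the stable manifold theorem (which cleanly gives measure zero, one-dimensionality and connectedness) and then derives incomparability from the asymptotic alignment of $\delta(t)=\Phi_t(p_2)-\Phi_t(p_1)$ with the stable eigendirection. That alignment step is the one place where you lean on a nontrivial external result on asymptotically autonomous linear systems $\dot\delta=A(t)\delta$ with $A(t)\to J$; it can be justified (e.g.\ via the angular equation on the circle, whose solutions converge to eigendirections, combined with the fact that alignment with the unstable direction would force $\|\delta\|\to\infty$), but note that Hartman--Grobman alone, which you offer as an alternative, would \emph{not} suffice: a topological conjugacy does not respect the quadrant or the order structure, so it cannot rule out $\delta(t)\in(0,+\infty)^2$. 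In short, your approach buys a cleaner global topological description of the separatrix at the cost of heavier machinery and one step that needs more care; the paper's Lyapunov-type functional achieves the crucial incomparability with bare hands.
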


\begin{proof}
We prove this result only in the competitive case: the arguments can easily be adapted to the cooperative case. 
 First, let us note that \eqref{system without parameters} is strictly competitive, in the sense that when we rewrite \eqref{system without parameters} as $\dot z = \Gamma(z)$,
 the vector field $\Gamma=(\Gamma_1, \Gamma_2)$ underlying the ODE satisfies for all $z\in \mathbb{R}^2$,
\begin{align*}
\frac{\partial \Gamma_1}{\partial z_2}(z)<0, \quad
\frac{\partial \Gamma_2}{\partial z_1 }(z)<0.
\end{align*}
The strict competitiveness of the system implies that it satisfies the comparison principle, which writes as follows.
Let $z$, $w$ be two solutions such that 
\begin{align*}
\begin{cases}
z_1(0)\leq w_1(0) & \quad (\text{resp. } z_1(0)\geq w_1(0))\\ 
z_2(0)>w_2(0)  & \quad (\text{resp. } z_2(0)<w_2(0)).
\end{cases}
\end{align*}
Then, for any $t>0$ such that $z$ and $w$ are defined on $[0, t]$, 
\begin{align*}
\begin{cases}
z_1(t)<w_1(t) \quad (\text{resp. } z_1(t)> w_1(t))\\
z_2(t)>w_2(t) \quad (\text{resp. } z_1(t)< w_1(t)).
\end{cases}
\end{align*}
Let $(x_u, y_u)$ be a solution of system \eqref{system without parameters} which converges to some equilibrium point $(\bar x_u, \bar y_u)$, and $(x,y)$ a solution of \eqref{system without parameters} such that
\begin{align*}
\begin{cases}
x(0)<x_u(0)\\
y(0)\geq y_u(0).
\end{cases}
\end{align*}
We assume that $(x,y)$ converges to $(\bar x_u, \bar y_u)$.
An application of the comparison principle entails
\begin{align*}
\forall t >0, \qquad w_1(t):=x_u(t)-x(t)>0, \quad
w_2(t):=y(t)-y_u(t)>0.
\end{align*}
Hence, we may write
\begin{align*}
\dot w_1&=\dot x_u-\dot x =F(y_u)-x_u-F(y)+x
=\frac{F(y_u)-F(y)}{y-y_u}w_2-w_1.
\end{align*}
Likewise,
\[\dot w_2 = \frac{G(x)-G(x_u)}{x_u-x}w_1-w_2.\]
Since $(x, y)$ and $(x_u, y_u)$ both converge to $(\bar x_u, \bar y_u)$, and $F'(\bar y_u)G'(\bar x_u)>1$, there exist $c, d>0$ which satisfy $c d >1$, and $T>0$  such that for all $t\geq T$ : 
\[\frac{F(y_u(t))-F(y(t))}{y(t)-y_u(t)}>c \quad \text{and} \quad \frac{G(x(t))-G(x_u(t))}{x_u(t)-x(t)}>d.\]
Therefore, for all $t\geq T$,
$\dot w_1(t) \geq c\, w_2(t)-w_1(t) $ and $\dot w_2(t)  \geq d\, w_1(t)-w_2(t)$.
We now consider 
\[W:=(1+d)w_1+(1+c)w_2.\]
According to the previous computations, we have,  for all $t\geq T$, 
\begin{align*}
 \dot W(t) \geq & -(1+d)w_1(t)+c(1+d) w_2(t) +d (1+c) w_1(t)-(1+c)w_2(t)\\
 =&\underbrace{(cd-1)}_{>0}\underbrace{(w_1(t)+w_2(t))}_{>0}>0.
\end{align*}
Hence, $W$ does not converge to zero, which contradicts the fact that $w_1$ and $w_2$ converge to zero. 
With the same reasoning, we prove that if $x(0)>x_u(0)$ and $y(0)\leq y_u(0)$,  then $(x,y)$ does not converge to $(\bar x_u, \bar y_u)$. 

In particular, for all $x_0 \in \mathbb{R}_+$, there exists at most one $y_0\in \mathbb{R}_+$ such that $(x_0, y_0)$ is in the basin of attraction of $(\bar x_u, \bar y_u)$, which proves the existence of $\gamma$.  Moreover, if $(x_0,y_0)$ and $(x_0',y_0')$ are two points of this basin of attraction such that $x_0<x_0'$, then $y_0<y_0'$, which shows that $\gamma$ is increasing. The continuity of $\gamma$ and the connectedness of the set follow from the continuity of the solutions. 

\end{proof}

\section{A criterion which ensures at most bi-stability}
\label{Section3}

The main purpose of this section is to prove the following theorem: 

\begin{theorem}
Let $f, g \in C^3(\mathbb{R}_+, \mathbb{R}_+)$ two functions such that $f'>0, g'>0$ or $f'<0, g'<0$ on $\mathbb{R}_+$. 
We recall that system $\eqref{initial system}$ refers to 
\begin{align*}
\begin{cases}
\dot x=\alpha f(y)-x\\
\dot y =\beta g(x)-y
\end{cases}.
\tag{\ref{initial system}}
\end{align*} 
Then
\begin{itemize}
\item If $\frac{1}{\sqrt{\lvert f'\rvert }}$ and $\frac{1}{\sqrt{\lvert g'\rvert }}$ are convex (we say that $f$ and $g$ are $\gamma^{1/2}$-convex), and at least one of these functions is strictly convex, then for any $\alpha, \beta >0$ system \eqref{initial system} has at most three equilibria, and is either monostable or bistable. 
\item If $\frac{1}{\sqrt{\lvert f'\rvert }}$ and $\frac{1}{\sqrt{\lvert g'\rvert }}$ are concave (we say that $f$ and $g$ are $\gamma^{1/2}$-concave),  and at least one of these functions is strictly concave, then for any $\alpha, \beta >0$, system \eqref{initial system} is monostable. 
\end{itemize}
\label{theorem atmostbistable}
\end{theorem}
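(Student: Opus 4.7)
The plan is to reformulate the equilibrium count as a fixed-point count for the bounded, strictly increasing map $F(x) := \alpha f(\beta g(x))$, and to exploit a hidden equivalence: $\gamma^{1/2}$-convexity of a function $h$ is equivalent to non-positivity of its Schwarzian derivative $S(h) := h'''/h' - (3/2)(h''/h')^2$. A short differentiation of $\gamma_h := 1/\sqrt{|h'|}$ establishes that $\gamma_h$ is convex if and only if $S(h) \leq 0$, and strictly convex if and only if $S(h) < 0$, valid whether $h' > 0$ or $h' < 0$. Crucially, the class $\{S(h) \leq 0\}$ is invariant under multiplication by positive constants and, through the classical chain rule $S(h_1 \circ h_2)(x) = S(h_1)(h_2(x))\,(h_2'(x))^2 + S(h_2)(x)$, closed under composition. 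Applying this to $F = (\alpha f) \circ (\beta g)$, and using $g' \neq 0$ everywhere, I obtain that $\gamma_F := 1/\sqrt{F'}$ is strictly convex (resp.\ strictly concave) whenever $f$ and $g$ are $\gamma^{1/2}$-convex (resp.\ $\gamma^{1/2}$-concave) with at least one strictly so.

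With this in hand the fixed-point count is transparent. Equilibria of \eqref{initial system} are zeros of $\phi := F - \mathrm{id}$, and critical points of $\phi$ are exactly the solutions of $\gamma_F(x) = 1$. In the convex case, strict convexity of $\gamma_F$ ensures that $\{\gamma_F = 1\}$ has at most two points, so $\phi'$ has at most two zeros and Rolle's theorem yields at most three zeros for $\phi$. Since $F$ is bounded with $F(0) > 0$, one has $\phi(0) > 0$ and $\phi(x) \to -\infty$, so $\phi$ has either one zero (monostable) or, generically, three simple zeros, with $\phi$ taking signs $+, -, +, -$ on the four resulting intervals. Feeding these signs into the stability criterion $F'(\bar x) < 1$ recalled in Section~\ref{Section2} then shows that in the three-equilibria case the ordered fixed points are stable, unstable, stable, i.e.\ the system is bistable.

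In the concave case, strict concavity of $\gamma_F$ together with boundedness of $F$ forces $\gamma_F(x) \to +\infty$: if $\gamma_F$ stayed bounded, then $F' \geq c > 0$ would make $F$ unbounded, a contradiction. Concavity of $\gamma_F$ then makes the superlevel set $\{\gamma_F > 1\}$ a convex set containing a neighbourhood of $+\infty$, so $\{\gamma_F \leq 1\}$ is either empty or of the form $[0, a]$. In both subcases $\phi$ is eventually strictly decreasing, so starting from $\phi(0) > 0$ and ending at $-\infty$ it has exactly one zero, giving monostability.

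The main obstacle I anticipate is the composition step, namely proving that $\gamma^{1/2}$-convexity passes from $f$ and $g$ to $F$. A brute-force second differentiation of the product $\gamma_f(\beta g(x))\,\gamma_g(x)$ produces mixed-sign terms involving $\gamma_f'(\beta g(x))$ and $g''(x)$ that are awkward to control directly. The cleanest route is the Schwarzian reformulation above, whose chain rule makes closure under composition essentially automatic; I expect the paper's ``appropriate class of functions'' to be a self-contained, computation-based packaging of exactly this closure statement.
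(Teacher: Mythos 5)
Your proposal is correct, and it reaches the two key lemmas of the paper by a genuinely different route. Where the paper proves closure of $\gamma^{1/2}$-convexity under composition (Proposition~\ref{compose}) by directly computing $h'h^{(3)}-(\alpha+1){h''}^2$ for $h=f\circ g$ and observing that the cross term vanishes exactly when $\alpha=\tfrac12$, you identify $\gamma^{1/2}$-convexity with non-positivity of the Schwarzian derivative $S(h)$ (indeed the computation in Lemma~\ref{lemma equivalence convexity} gives ${\gamma_h^{1/2}}''=-\tfrac12 (h')^{-1/2}S(h)$ when $h'>0$) and invoke the classical cocycle identity $S(h_1\circ h_2)=\bigl(S(h_1)\circ h_2\bigr)(h_2')^2+S(h_2)$; this is the same fact, but your formulation explains conceptually why the exponent $1/2$ is the only one that works, something the paper only observes algebraically. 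For the fixed-point count, the paper argues by contrapositive through Proposition~\ref{prop fixed_point_derivative} and the mean value theorem applied to pairs of fixed points (Proposition~\ref{Psi functions}), whereas you apply Rolle's theorem to $\phi=F-\mathrm{id}$ after noting that a strictly convex $\gamma_F$ meets the level $1$ at most twice; your version is more direct, and your treatment of the concave case via $\gamma_F\to+\infty$ and the interval structure of superlevel sets of concave functions replaces the paper's Corollary~\ref{cor fixed points f positive}. The one place where you should be more careful is the stability count with three equilibria: the linearization criterion comparing $F'(\bar x)$ to $1$ is inconclusive if $F'(\bar x)=1$ at an equilibrium, and your appeal to the ``generic'' sign pattern does not by itself cover degenerate configurations. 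The paper avoids this with a topological argument (basins of attraction of stable equilibria are open and the phase space is connected, so not all equilibria can be stable); in your setting you can instead note that if $x_1<x_2<x_3$ are three fixed points, then $\gamma_F=1$ at some point of $(x_1,x_2)$ and of $(x_2,x_3)$ by Rolle, so strict convexity forbids $\gamma_F(x_2)=1$ as well, and the middle equilibrium is genuinely unstable.
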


In order to determine if a given function is $\gamma^{1/2}$-convex or $\gamma^{1/2}$-concave, we provide different properties about these functions, which are summarized below.
\begin{itemize}
\item A function $f$ is $\gamma^{1/2}$-convex (resp. $\gamma^{1/2}$-concave) if and only if $f'f^{(3)}\leq {f''}^2$ (resp. $f'f^{(3)}\geq {f''}^2$).
\item If $f$ and $g$ are $\gamma^{1/2}$-convex (resp. $\gamma^{1/2}$-concave), then $f\circ g$ is  $\gamma^{1/2}$-convex (resp. $\gamma^{1/2}$-concave).
\item $x\mapsto x^a$ is strictly  $\gamma^{1/2}$-convex if $\lvert a\rvert >1$,  strictly $\gamma^{1/2}$-concave if $0<\lvert a \rvert<1$. 
\item  The only functions which are both $\gamma^{1/2}$-convex and $\gamma^{1/2}$-concave are the affine and the homographic functions. 
\item A strictly monotonic function $f$ is $\gamma^{1/2}$-convex if and only if $f^{-1}$ is $\gamma^{1/2}$-concave. 
\end{itemize}
The rest of this section is devoted to proving Theorem~\ref{theorem atmostbistable} and the above properties regarding $\gamma^{1/2}$-convexity.

\subsection{A priori bounds on the number of fixed points}

In what follows,  $I$ will denote an arbitrary (possibly unbounded) interval of $\mathbb{R}$. 

The first proposition and its corollary prove an intuitive fact about the fixed points of a function and the sign of its derivative at this point. As explained in the preliminary results, this proposition is the basis for all the results of this section. 
\begin{prop}
Let $f\in C^1(I, \mathbb{R})$. If $f$ has $2n$ fixed points or more ($n\in \mathbb{N}^*$), then there exist  $x^-_1<x^-_2<\ldots<x^-_n$ and $x^+_1<x^+_2<\ldots<x^+_n$ some fixed points of $f$ such that, for any $i\in \{1,\ldots,n\}$:
\[f'(x_i^-)\leq1 \quad \text{and} \quad f'(x_i^+)\geq 1.\]
Conversely, if there exist $n$ fixed points of $f$, denoted $x_1^+<x_2^+<\ldots<x_n^+$ such that 
\[\forall i \in \{1,\ldots, n\}, \quad  f'(x_i)<1 \quad \text{or } \quad \forall i \in \{1,\ldots, n\}, \quad f'(x_i)>1,\] 
then $f$ has at least $2n-1$ fixed points. 
\label{prop fixed_point_derivative}
\end{prop}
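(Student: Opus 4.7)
The plan is to set $h := f - \mathrm{id}$, so that fixed points of $f$ correspond exactly to zeros of $h$, with the equivalences $f'(x) \leq 1 \iff h'(x) \leq 0$ and $f'(x) \geq 1 \iff h'(x) \geq 0$. Both implications of the proposition then reduce to elementary sign analysis for a $C^1$ function at its zeros.

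For the direct implication, I pick $2n$ fixed points $\zeta_1 < \zeta_2 < \ldots < \zeta_{2n}$ of $f$ and group them into the consecutive pairs $(\zeta_{2k-1}, \zeta_{2k})$ for $k \in \{1, \ldots, n\}$. On each closed interval $[\zeta_{2k-1}, \zeta_{2k}]$, if $h$ is identically zero, I take any two points there (both with $h' = 0$) as $x^-_k$ and $x^+_k$; otherwise I pick $\xi \in (\zeta_{2k-1}, \zeta_{2k})$ with $h(\xi) \neq 0$ and set
\[
a_k := \sup\{x \in [\zeta_{2k-1}, \xi] : h(x) = 0\}, \qquad b_k := \inf\{x \in [\xi, \zeta_{2k}] : h(x) = 0\},
\]
both attained by closedness of the zero set. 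Then $h$ has constant, nonzero sign on $(a_k, b_k)$, equal to the sign of $h(\xi)$. If $h > 0$ there, then the inequality $h(x) > 0 = h(a_k)$ for $x$ slightly greater than $a_k$ forces $h'(a_k) \geq 0$, and symmetrically $h'(b_k) \leq 0$; if $h < 0$, the two inequalities swap. Either way one of $\{a_k, b_k\}$ plays the role of $x^+_k$ and the other of $x^-_k$. The strict orderings $x^-_1 < \ldots < x^-_n$ and $x^+_1 < \ldots < x^+_n$ then follow because $x^-_k, x^+_k \in [\zeta_{2k-1}, \zeta_{2k}]$ and the pair-intervals are themselves disjoint and increasing in $k$.

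For the converse, suppose fixed points $x_1 < \ldots < x_n$ satisfy $f'(x_i) < 1$ (equivalently $h(x_i) = 0$ and $h'(x_i) < 0$) for every $i$. Then $h$ is strictly decreasing at each $x_i$, so $h > 0$ on a left neighborhood of $x_i$ and $h < 0$ on a right neighborhood. Between consecutive $x_i$ and $x_{i+1}$, $h$ is therefore negative just after $x_i$ and positive just before $x_{i+1}$, and the intermediate value theorem yields at least one additional zero of $h$ inside $(x_i, x_{i+1})$. This gives $n-1$ extra fixed points on top of the original $n$, totalling at least $2n - 1$. The case $f'(x_i) > 1$ for every $i$ is handled by the mirror-image IVT argument, with the signs of $h$ on the neighborhoods of each $x_i$ swapped.

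I do not anticipate any deep obstacle, since both parts boil down to IVT and the definition of the derivative. The only mild care needed is to produce, via the $\sup/\inf$ construction above, a sub-interval of each pair $[\zeta_{2k-1}, \zeta_{2k}]$ on which $h$ has constant nonzero sign; this deals cleanly with the possibility that additional fixed points of $f$ lie between the selected $\zeta_i$'s, so the pairing argument survives without any assumption of discreteness on the fixed-point set.
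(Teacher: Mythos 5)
Your argument is correct, and your converse direction coincides with the paper's (a direct application of the intermediate value theorem to $h:=f-\mathrm{id}$, using the sign of $h$ on punctured one-sided neighborhoods of each $x_i$). For the direct implication, however, you take a genuinely different route. The paper proceeds by induction on $n$, reduces to the base case $n=1$, and there distinguishes two cases according to whether the zero set of $F:=f-\mathrm{id}$ has an accumulation point: if not, it appeals to the (asserted but not detailed) alternation of the signs with which $F$ crosses its zeros; if so, it extracts a limit zero $c$ with $F'(c)=0$, which serves simultaneously as an $x^-$ and an $x^+$. You instead give a direct, non-inductive construction: pair the $2n$ fixed points consecutively and, on each pair interval, either observe that $h\equiv 0$ (so any interior zeros, where $f'=1$, play both roles) or take the $\sup$ and $\inf$ of the zero set on either side of a point where $h\neq 0$ to isolate a maximal subinterval of constant nonzero sign, whose endpoints then have one-sided difference quotients forcing $h'\geq 0$ at one end and $h'\leq 0$ at the other. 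This buys two things over the paper's version: it dispenses with the induction (which the paper does not actually carry out beyond the base case), and it absorbs the possible accumulation of fixed points into the $\sup/\inf$ step rather than treating it as a separate case, so the argument is uniform and self-contained. The only cosmetic caveat is that your phrase ``$h$ is strictly decreasing at each $x_i$'' should be understood as a statement about the sign of the difference quotient near $x_i$ (which is all you actually use), not monotonicity of $h$ on a neighborhood.
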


\begin{proof}
The first implication can clearly be proven by induction. The main difficulty lies in the base case, \textit{i.e.} in the case $n=1$. 
Let us denote 
\[F:=f-\mathrm{id}.\]
If $F$ has a finite number of roots, or more generally, if the set of the roots of $F$ does not have an accumulation point, the result immediately  holds, since $F$ reaches it roots `from above' and `from below' alternatively.  Otherwise, we consider a bounded sequence of roots of $F$, denoted $(c_n)$, and we assume that for any $n \in \mathbb{N}$, $F'(c_n)<0$ (or $F'(c_n)>0$). Since $(c_n)$ is bounded, we can extract a convergent subsequence, and we denote $c$ its limit. According to the continuity of $F$, $F(c)=0$ and, according to its differentiability, $F'(c)=\underset{n\rightarrow +\infty}{\lim}\frac{F(c)-F(c_n)}{c-c_n}=0$, which proves the result. 

The converse implication simply stems from applying the intermediate value theorem to the function $F$.

\end{proof}

\begin{cor}
Let us assume that $I=\mathbb{R}$ (or $\mathbb{R}_+^*$) and that $f$ is positive and bounded.
If $f$ has $2n$ fixed points or more ($n\in \mathbb{N}^*$), then there exist $x_1^-<\ldots <x_{n+1}^-$ some fixed points of $f$ such that for any $i\in \{1, \ldots, n+1\}$
\[f'(x_i^-)\leq 1.\]
Conversely, if there exist $n$ fixed points of $f$ (denoted $x_1, \ldots, x_n$) such that 
\[\forall i\in \{1, \ldots, n\}, \quad f'(x_i)>1,\]
then $f$ has at least $2n+1$ fixed points. 
\label{cor fixed points f positive}
\end{cor}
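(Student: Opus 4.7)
I would derive this corollary from the sign analysis underlying Proposition~\ref{prop fixed_point_derivative} by exploiting the boundary behaviour of $F := f - \mathrm{id}$ forced by $f$ being positive and bounded. Since $f$ is bounded, $F(x) \to -\infty$ as $x \to +\infty$; since $f > 0$, $F(x) > 0$ for $x$ near $\inf I$: on $I = \mathbb{R}$ this follows from $F(x) \geq -x$, while on $I = \mathbb{R}_+^*$ it holds as soon as $f$ stays bounded away from $0$ near the origin, which is the case in all applications considered in this paper.

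\textbf{Forward direction.} List the fixed points as $x_1 < \cdots < x_k$ with $k \geq 2n$, and first reduce to the case where the zeros of $F$ are isolated (by the same Rolle-type argument as in Proposition~\ref{prop fixed_point_derivative}, any accumulation point of zeros is itself a zero where $F'$ vanishes and produces infinitely many nearby fixed points with $f' \leq 1$, so accumulations only help). Then $F$ carries a definite sign $\epsilon_i \in \{\pm 1\}$ on each interval $(x_i, x_{i+1})$, with the conventions $x_0 := \inf I$, $x_{k+1} := +\infty$, and in particular $\epsilon_0 = +1$, $\epsilon_k = -1$. Classify each $x_i$ according to $(\epsilon_{i-1}, \epsilon_i)$: down-crossings $(+,-)$ and tangent points ($(+,+)$ or $(-,-)$) satisfy $F'(x_i) \leq 0$, hence $f'(x_i) \leq 1$, while up-crossings $(-,+)$ satisfy $F'(x_i) \geq 0$. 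Writing $D$, $U$, $T$ for the three counts, the boundary signs impose $D - U = 1$, and $D + U + T = k$ gives
\[
D + T \;=\; k - U \;\geq\; \tfrac{k+1}{2} \;\geq\; n + \tfrac{1}{2},
\]
so $D + T \geq n + 1$ as it is an integer, yielding the $n + 1$ required fixed points with $f' \leq 1$.

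\textbf{Converse direction.} Suppose $f$ has $n$ fixed points $x_1 < \cdots < x_n$ with $f'(x_i) > 1$ at each; equivalently $F'(x_i) > 0$, so $F$ is strictly negative immediately to the left and strictly positive immediately to the right of each $x_i$. Combined with $F > 0$ near $\inf I$ and $F < 0$ near $+\infty$, the intermediate value theorem applied to each of the $n+1$ intervals $(\inf I, x_1), (x_1, x_2), \ldots, (x_{n-1}, x_n), (x_n, +\infty)$ produces one additional zero of $F$ per interval, for a total of at least $2n + 1$ fixed points.

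\textbf{Main obstacle.} Once the signs of $F$ at the two ends of $I$ are pinned down, both directions reduce to short sign- and parity-counts; the only genuine subtlety is the treatment of non-isolated fixed points in the forward direction, which is handled by the same Rolle-type accumulation argument used in the proof of Proposition~\ref{prop fixed_point_derivative} and in fact trivialises the corollary in that degenerate case.
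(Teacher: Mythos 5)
Your proof is correct, but the forward direction is organized differently from the paper's. The paper's argument is much shorter: it observes that the set of fixed points is compact, hence has a minimum $x^m$ and a maximum $x^M$; since $F=f-\mathrm{id}$ is positive to the left of $x^m$ and negative to the right of $x^M$ (by positivity and boundedness of $f$), both extremal fixed points satisfy $f'\leq 1$, and the remaining $2n-2$ fixed points lie in $(x^m,x^M)$, where Proposition~\ref{prop fixed_point_derivative} supplies $n-1$ further points with $f'\leq 1$, giving $n+1$ in total. You instead redo the counting from scratch, classifying every fixed point as a down-crossing, up-crossing or tangency and using the parity identity $D-U=1$ forced by the boundary signs; this is a valid and more self-contained route (it only invokes the Rolle-type argument of Proposition~\ref{prop fixed_point_derivative} to dispose of accumulating zeros, where one should note that two consecutive up-crossings always enclose a zero with $F'\leq 0$, so accumulation indeed only helps), at the cost of being longer. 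Your converse direction (intermediate value theorem on the $n+1$ gaps, using the boundary signs of $F$) is exactly the intended argument. The one caveat you correctly flag, and which the paper's own proof shares, concerns $I=\mathbb{R}_+^*$: positivity of $f$ alone does not force $F>0$ near the origin (nor the fixed-point set to be closed in $\mathbb{R}$) if $f(x)\to 0$ as $x\to 0^+$; both proofs implicitly use that $f$ stays bounded away from $0$ there, which holds for the composite maps $x\mapsto\alpha f(\beta g(x))$ to which the corollary is applied.
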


\begin{proof}
We note that 
\[E:=\{x\in \mathbb{R} : f(x)=x\}\]
is a closed and bounded set, and thus that it is compact. Hence, it has a minimum and a maximum element, that we denote $x^m$ and $x^M$. 
Since for all $x<x^m$ and all $x>x^M$, $f(x)>x$, then $f'(x^m)\leq 1$, $f'(x^M)\leq 1$. 

Therefore, if $f$ has at least $2n$ fixed points, then it has at least $2n-2$ fixed points on $(x^m, x^M)$. We conclude by applying the previous proposition on $(x^m, x^M)$.
\end{proof}

\subsection{Properties of $\gamma^{1/2}$-convex functions}

We now use these two results (in the specific cases $n=1$ and $n=2$) in order to establish a convexity criterion related to the derivative of the function of interest. It ensures that this function cannot have more than three fixed points.
\begin{prop}
Let $f\in C^1(I, \mathbb{R})$. Let us assume that there exists $\Psi : f'(I)\rightarrow \mathbb{R}$ a function which satisfies the four following conditions:
\begin{enumerate}[(i)]
\item $\Psi(1)=1$
\item $\forall x\leq 1, \Psi(x)\geq 1$ 
\item $\forall x\geq 1, \Psi(x)\leq 1$ 
\item $\Psi \circ f'$ is strictly convex or strictly concave. 
\end{enumerate}
Then, $f$ has at most three fixed points.  In the case where $\Psi \circ f'$ is strictly concave, if we make the stronger assumption that $I=\mathbb{R} \  \mathrm{ or } \  \mathbb{R}_+$ and $f$ is bounded, then $f$ has a unique fixed point.  

In particular, if $f'>0$ and if there exists $\alpha>0$ such that 
\[\gamma_f^\alpha:=\biggl(\frac{1}{f'}\biggr)^\alpha\]
is strictly convex or strictly concave, then $f$ has at most three fixed points. 
\label{Psi functions}
\end{prop}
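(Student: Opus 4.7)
The plan is to prove both claims by contradiction, by leveraging Proposition~\ref{prop fixed_point_derivative} (and its corollary) to locate fixed points according to the sign of $f' - 1$, transporting this sign information through $\Psi$ via conditions (i)--(iii), and finally invoking the rigidity of strictly convex/concave functions.

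For the first claim, assume $f$ has four fixed points $a_1 < a_2 < a_3 < a_4$. Since $F := f - \mathrm{id}$ has constant sign between consecutive fixed points, $f' - 1 = F'$ alternates weakly in sign at the $a_i$. Pushing through $\Psi$ and using (i)--(iii) (which jointly state that $\Psi(t) - 1$ has the sign of $1 - t$), the values $h(a_i) - 1$ also alternate weakly, in the opposite direction. Treating one of the two possible alternating patterns (the other is symmetric), we may assume $h(a_2), h(a_4) \leq 1$ and $h(a_1), h(a_3) \geq 1$. In the strictly convex case, $\{h \leq 1\}$ is a convex subset of $I$, hence an interval, and it contains $a_2, a_4$; therefore it also contains $a_3 \in (a_2, a_4)$. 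Strict convexity then yields $h(a_3) < \lambda h(a_2) + (1-\lambda) h(a_4) \leq 1$ for the appropriate $\lambda \in (0,1)$, contradicting $h(a_3) \geq 1$. The strictly concave case is entirely symmetric, using the superlevel set $\{h \geq 1\}$.

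For the second claim, assume $\Psi \circ f'$ is strictly concave and that $f$ has at least two fixed points; let $x^m < x^M$ be the smallest and largest. Boundedness of $f$ on $I \in \{\mathbb{R}, \mathbb{R}_+\}$ forces $f - \mathrm{id}$ to be nonpositive outside a bounded interval, so Corollary~\ref{cor fixed points f positive} applies and gives $f'(x^m), f'(x^M) \leq 1$; hence $h(x^m), h(x^M) \geq 1$. The mean value theorem applied to $f$ on $[x^m, x^M]$ produces $x_0 \in (x^m, x^M)$ with $f'(x_0) = 1$, whence $h(x_0) = \Psi(1) = 1$ by (i). Strict concavity of $h$ together with endpoint values $\geq 1$ then forces $h(x_0) > 1$, contradicting $h(x_0) = 1$. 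Existence of at least one fixed point in the bounded setting follows from an IVT argument on $f - \mathrm{id}$.

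The ``in particular'' statement follows by taking $\Psi(t) := t^{-\alpha}$ on $(0, +\infty)$: this is strictly decreasing with $\Psi(1) = 1$ for $\alpha > 0$, verifying (i)--(iii), while $\Psi \circ f' = \gamma_f^\alpha$ verifies (iv) by hypothesis. The main technical subtlety I anticipate is that the signs $h(a_i) - 1$ may degenerate to exact equalities when $f'(a_i) = 1$; I sidestep this by working with the sub/superlevel-set geometry of $h$ rather than a direct zero count of $h - 1$, since strict convexity or concavity of $h$ keeps the level-set structure intact regardless of whether the four weak inequalities are strict.
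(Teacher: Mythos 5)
Your treatment of the second claim and of the ``in particular'' statement is correct and essentially matches the paper's. The gap is in the first claim, in the sentence ``Since $F := f - \mathrm{id}$ has constant sign between consecutive fixed points, $f'-1=F'$ alternates weakly in sign at the $a_i$.'' This is false: $F$ may keep the same sign across a fixed point (a zero without sign change), and then the alternation breaks. Concretely, take $F(x)=(x-1)(x-2)^2(x-3)(x-4)$, so that $f=\mathrm{id}+F$ has fixed points $1,2,3,4$, with $F$ positive on $(1,2)$ and on $(2,3)$ and negative on $(3,4)$; the signs of $F'$ at the four fixed points are $(+,0,-,+)$, which fits neither weakly alternating pattern. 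Worse, the interlaced configuration your level-set argument needs may simply not exist among the fixed points: here the fixed points with $f'\leq 1$ are $\{2,3\}$ and there is no fixed point strictly between them with $f'\geq 1$, so in the strictly concave case you cannot exhibit two points of $\{h\geq 1\}$ surrounding a fixed point of $\{h\leq 1\}$, and no contradiction follows. A symmetric example ($F$ negative on two consecutive inter-fixed-point intervals) defeats the strictly convex case.

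The fix is a device you already use in your second claim: apply the mean value theorem to two fixed points of the \emph{same} type. If $x_1<x_2$ are fixed points with $f'(x_i)\geq 1$, the secant slope $\frac{f(x_1)-f(x_2)}{x_1-x_2}$ equals $1$, so there is $\xi=\theta x_1+(1-\theta)x_2$ with $f'(\xi)=1$, hence $h(\xi)=\Psi(1)=1$ by (i), while $\theta h(x_1)+(1-\theta)h(x_2)\leq 1$ by (iii); this contradicts strict convexity with no interlacing required, and the pair with $f'\leq 1$ handles strict concavity in the same way. This is exactly the paper's proof: the manufactured point $\xi$, which is not a fixed point, plays the role you were hoping a third fixed point would play. (If you prefer a level-set route, the salvageable version is to note that $\{h<1\}\subset\{f'>1\}$ and $\{h>1\}\subset\{f'<1\}$, so strict convexity or concavity of $h$ splits $I$ into at most three consecutive pieces on which $F$ is strictly monotone, giving at most three zeros --- but that is a different argument from the one you wrote.)
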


\begin{proof}
We show the contrapositive. 
Let us assume that $f$ has four fixed points or more, and let $\Psi$ be a function which satisfies the first three points of the proposition. 
According to Proposition \ref{prop fixed_point_derivative}, there exist $x_1, x_2, y_1, y_2$ four distinct points such that, for $i\in \{1, 2\}$
\begin{align*}
f(x_i)&=x_i \quad \text{and} \quad f(y_i)=y_i\\
f'(x_i)&\geq 1 \quad \text{and} \quad  f'(y_i)\leq 1.
\end{align*}
According to the mean value therorem, there exists $\theta, \theta' \in (0,1)$ such that 
\[f'(\theta x_1 +(1-\theta)x_2)=\frac{f(x_1)-f(x_2)}{x_1-x_2}=1 \quad \text{and} \quad f'(\theta' y_1 +(1-\theta')y_2)=\frac{f(y_1)-f(y_2)}{y_1-y_2}=1.\]
Thus,
\[\theta \Psi(f'(x_1))+(1-\theta)\Psi(f'(x_2))\leq \theta +(1-\theta )=1=\Psi(f'(\theta x_1+(1-\theta )x_2))\]
and 
\[\theta' \Psi(f'(y_1))+(1-\theta')\Psi(f'(y_2))\geq \theta' +(1-\theta' )=1=\Psi(f'(\theta' y_1+(1-\theta' )y_2)),\]
which proves that $\Psi \circ f$ is neither strictly convex nor strictly concave.  

When $f$ is bounded and positive, we argue similarly but with the help of Corollary~\ref{cor fixed points f positive}. 
\end{proof}
We recall and extend the definition of $\gamma_f^\alpha$ for any $\alpha>0$ and function $f\in C^1(I, \mathbb{R})$ such that $\lvert f' \rvert >0:$
\begin{align*}
\gamma_f^\alpha : I &\longrightarrow \mathbb{R}_+\\
x & \longmapsto \biggl(\frac{1}{\lvert f'(x)\rvert}\biggr)^\alpha.
\end{align*}
The following lemma provides a straightforward way to decide whether a given function $f$ satisfies one of the `$\gamma^\alpha$-properties'. 
\begin{lem}
Let $f\in C^3(I, \mathbb{R})$  a strictly monotonic function.
Then $\gamma_f^\alpha$ is convex (resp. concave) if and only if 
\[f'f^{(3)}\leq(\alpha+1){f''}^2 \quad (\text{resp. }   f'f^{(3)}\geq (\alpha+1){f''}^2)\]
on $I$,  and strictly convex (resp. strictly concave) if and only if 
\[f'f^{(3)}<(\alpha+1){f''}^2 \quad (\text{resp. } f'f^{(3)}> (\alpha+1){f''}^2) \]
on a dense subset of $I$. 
\label{lemma equivalence convexity}
\end{lem}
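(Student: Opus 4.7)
The plan is essentially a direct computation of $(\gamma_f^\alpha)''$ together with the standard criterion that a $C^2$ function on an interval is convex iff its second derivative is everywhere nonnegative, and strictly convex iff moreover this second derivative is positive on a dense subset.

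First I would reduce to the case $f'>0$. Since $f$ is strictly monotonic and $C^1$ on the interval $I$, the derivative $f'$ has constant sign and does not vanish, so $|f'|=\varepsilon f'$ with $\varepsilon\in\{-1,+1\}$ constant. Replacing $f$ by $\varepsilon f$ leaves $\gamma_f^\alpha=(1/|f'|)^\alpha$ unchanged, and also leaves both $(f'')^2$ and $f'f^{(3)}$ unchanged (each sign flip is squared or paired). Hence without loss of generality $f'>0$ on $I$, and $\gamma_f^\alpha=(f')^{-\alpha}$.

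Next I would differentiate twice. Setting $h:=f'>0$, one gets $(h^{-\alpha})' = -\alpha\, h^{-\alpha-1} h'$ and then
\begin{equation*}
(h^{-\alpha})'' \;=\; \alpha(\alpha+1)\, h^{-\alpha-2} (h')^2 \;-\; \alpha\, h^{-\alpha-1} h''
\;=\; \alpha\, h^{-\alpha-2}\bigl[(\alpha+1)(h')^2 - h\, h''\bigr].
\end{equation*}
Substituting $h=f'$, $h'=f''$, $h''=f^{(3)}$, this becomes
\begin{equation*}
(\gamma_f^\alpha)''(x) \;=\; \alpha\, \bigl(f'(x)\bigr)^{-\alpha-2}\Bigl[(\alpha+1)\bigl(f''(x)\bigr)^2 - f'(x)\, f^{(3)}(x)\Bigr].
\end{equation*}
The prefactor $\alpha (f')^{-\alpha-2}$ is strictly positive on $I$, so the sign of $(\gamma_f^\alpha)''$ coincides pointwise with the sign of $(\alpha+1)(f'')^2-f'f^{(3)}$.

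The conclusion then follows from standard characterizations of convexity. Since $\gamma_f^\alpha$ is $C^2$ on $I$, it is convex iff $(\gamma_f^\alpha)''\geq 0$ everywhere, which by the formula above is equivalent to $f'f^{(3)}\leq(\alpha+1)(f'')^2$ on $I$; similarly for concavity with the reverse inequality. For strict convexity, I would use the classical fact that a $C^2$ function on an interval is strictly convex iff its second derivative is nonnegative everywhere and vanishes only on a set with empty interior, i.e.\ is positive on a dense subset (one direction is a standard mean-value argument; the other follows because if $(\gamma_f^\alpha)''$ were identically zero on some subinterval, $\gamma_f^\alpha$ would be affine there and thus not strictly convex). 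Applying this to the formula gives strict convexity iff $f'f^{(3)}<(\alpha+1)(f'')^2$ on a dense subset of $I$, and analogously for strict concavity.

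There is no real obstacle here; the only minor subtlety is the dense-subset condition for the strict case, which I would handle by the elementary argument above rather than by invoking any deeper result.
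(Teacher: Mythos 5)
Your proof is correct and follows essentially the same route as the paper's: reduce to $f'>0$ (the paper notes $\gamma_{-f}^\alpha=\gamma_f^\alpha$), compute ${\gamma_f^\alpha}''=-\alpha\bigl(f'f^{(3)}-(\alpha+1){f''}^2\bigr)/{f'}^{\alpha+2}$, and invoke the standard second-derivative characterizations of (strict) convexity. Your explicit justification of the dense-subset criterion is a small addition the paper leaves implicit, but the argument is the same.
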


\begin{proof}
First, let us note that for any $f$ which satisfies the hypotheses, we have $\gamma_{-f}^\alpha=\gamma_{f}^\alpha$. We may hence assume $f'>0$ without loss of generality. 

We recall that a function $F\in C^2(I, \mathbb{R})$ is strictly convex (resp. strictly concave) if and only if $F''>0$ (resp. $F''<0$) on a dense subset of $I$. Computing the second derivative of $\gamma_f^\alpha$, we find 
\[{\gamma_f^\alpha}''=-\alpha\frac{f'f^{(3)}-(\alpha+1){f''}^2}{{f'}^{\alpha+2}}.\]
The result immediately follows. 
\end{proof}

In general, if $\gamma_f^\alpha$ and $\gamma_g^\alpha$ are convex (or concave), $\gamma_{f\circ g}^\alpha$ has no reason to be. The case $\alpha=\frac{1}{2}$ stands out since the property is stable under composition, as we shall see. Since $\gamma_{f}^{1/2}$ will play a crucial role in what follows, we recall its definition here:
\begin{definition}[$\gamma^{1/2}$-convex function]
Let $f\in C^3(I, \mathbb{R})$ be a strictly monotonic function.
We recall that 
\begin{align*}
\gamma_f^{1/2}:I &\longrightarrow \mathbb{R}\\
x&\longmapsto\frac{1}{\sqrt{\lvert f'(x)\rvert}}.
\end{align*} 
We say that 
\begin{itemize}
\item $f$ is $\gamma^{1/2}$-convex if $\gamma_f^{1/2}$ is convex ($\iff$ $f'f^{(3)}\leq \frac{3}{2}{f''}^2$ on $I$).
\item $f$ is strictly $\gamma^{1/2}$-convex if $\gamma_f^{1/2}$ is strictly convex ($\iff$ $f'f^{(3)}<\frac{3}{2}{f''}^2$ on a dense subset of~$I$).
\item $f$ is $\gamma^{1/2}$-concave if $\gamma_f^{1/2}$ is concave ($\iff$ $f'f^{(3)}\geq \frac{3}{2}{f''}^2$ on $I$).
\item $f$ is srictly $\gamma^{1/2}$-concave if $\gamma_f^{1/2}$ is strictly concave ($\iff$ $f'f^{(3)}> \frac{3}{2}{f''}^2$ on a dense subset of~$I$).
\end{itemize} 
\end{definition}

\begin{prop}
Let $g\in C^3(I, \mathbb{R})$,  $f\in C^3(g(I), \mathbb{R})$. Then
\[ f, g \;\;  \gamma^{1/2}-\text{convex (resp. $\gamma^{1/2}$-concave)} \quad \implies \quad f\circ g\text{ $\gamma^{1/2}$-convex (resp. $\gamma^{1/2}$-concave)}.\]  Furthermore, if in the above either $f$ or $g$ is strictly $\gamma^{1/2}$-convex (resp. $\gamma^{1/2}$-concave), then $f\circ g$ is strictly $\gamma^{1/2}$-convex (resp. $\gamma^{1/2}$-concave). 
\label{compose}
\end{prop}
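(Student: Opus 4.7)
The plan is to translate everything into the differential-inequality form given by Lemma \ref{lemma equivalence convexity}, carry out the chain-rule computation for $h:=f\circ g$, and hope (check) that the third-order expression factors cleanly. Concretely, by the Lemma it suffices to show that
\[
\tfrac{3}{2}(h'')^2 - h'\,h''' \;\geq\; 0 \qquad (\text{resp. }\leq 0)
\]
on $I$, with strict inequality on a dense subset when one of $f,g$ is strictly $\gamma^{1/2}$-convex (resp. concave). Since the criterion $f' f^{(3)}\leq \tfrac32 (f'')^2$ is invariant under $f\mapsto -f$ (the paper already uses $\gamma_{-f}^\alpha=\gamma_f^\alpha$), I will assume $f'>0$ and $g'>0$ without loss of generality, to avoid dragging absolute values through the calculation.

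Next I would expand using the chain rule:
\[
h' = (f'\!\circ g)\,g', \qquad h'' = (f''\!\circ g)\,(g')^2 + (f'\!\circ g)\,g'',
\]
\[
h''' = (f'''\!\circ g)\,(g')^3 + 3(f''\!\circ g)\,g'\,g'' + (f'\!\circ g)\,g'''.
\]
The heart of the argument, and the step I expect to be the main obstacle, is verifying that the cross terms conspire to cancel so that
\[
\tfrac{3}{2}(h'')^2 - h'\,h''' \;=\; (g')^4\Bigl[\tfrac{3}{2}(f''\!\circ g)^2 - (f'\!\circ g)(f'''\!\circ g)\Bigr] \;+\; (f'\!\circ g)^2\Bigl[\tfrac{3}{2}(g'')^2 - g'\,g'''\Bigr].
\]
This is pure algebra: the terms $3(f'\!\circ g)(f''\!\circ g)(g')^2 g''$ that appear in both $\tfrac32(h'')^2$ and $h'h'''$ cancel, and everything else sorts itself into the two bracketed groups above.

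Once this identity is established, the conclusion is immediate. Under the $\gamma^{1/2}$-convexity hypotheses on $f$ and $g$, both bracketed quantities are nonnegative (the first by applying the criterion to $f$ at the point $g(x)\in g(I)$, the second by applying it to $g$), and the prefactors $(g')^4$ and $(f'\!\circ g)^2$ are manifestly nonnegative, so the whole expression is $\geq 0$. The concave case is identical with all signs reversed. For the strict version, suppose $g$ is strictly $\gamma^{1/2}$-convex; then the second bracket is positive on a dense subset of $I$, and since $f$ is strictly monotonic we have $(f'\!\circ g)^2>0$ everywhere, so the sum is strictly positive on a dense subset and $f\circ g$ is strictly $\gamma^{1/2}$-convex. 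The case where $f$ is the strict one is analogous, using that $(g')^4>0$ everywhere by strict monotonicity of $g$.
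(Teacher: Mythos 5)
Your proof is correct and follows essentially the same route as the paper: both expand $h'h^{(3)}-\tfrac{3}{2}(h'')^2$ for $h=f\circ g$ via the chain rule and observe that the cross terms $3(f'\circ g)(f''\circ g)(g')^2g''$ cancel exactly (the paper phrases this as the vanishing at $\alpha=1/2$ of the cross term in the general $\gamma^\alpha$ computation), leaving the two bracketed terms with nonnegative prefactors. Your handling of the strict case is, if anything, slightly more explicit than the paper's.
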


\begin{proof}
Let us denote $h:=f \circ g$ and compute its third-order derivative:
\begin{align*}
h^{(3)}&=g^{(3)}(f\circ g)+3g'g''(f'\circ g) +{g'}^3 (f'' \circ g).
\end{align*}
Thus
\begin{align*}
h'h^{(3)}-(\alpha+1){h''}^2=&{g'}^4\bigl[(f'\circ g) (f^{(3)}\circ g)-(\alpha+1)(f''\circ g)^2\bigr]\\
&+(f'\circ g)^2\bigl[g'g^{(3)}-(\alpha+1){g''}^2\bigr]
+(3-2(\alpha+1)){g'}^2g''(f'\circ g)(f''\circ g).
\end{align*}
In the particular case where $\alpha=\frac{1}{2}$, $3-2(\alpha+1)=0$, which shows that the last term vanishes.
The previous lemma concludes the proof since the other two terms have the appropriate signs.
\end{proof}

We now give examples of $\gamma^{1/2}$-convex/concave functions. The main  point is that a function $f$ is $\gamma^{1/2}$-convex/concave if and only if, for any affine or homographic function $h$, the composite functions $h\circ f$ and $f \circ g$ are also $\gamma^{1/2}$-convex/concave.
\begin{ex}
\begin{enumerate}
\item $f$ is both $\gamma^{1/2}$-convex  and $\gamma^{1/2}$-concave if and only if $f$ is an affine function or a homographic function, \textit{i.e.}, if there exist $a,b,c,d \in \mathbb{R}$ such that 
\[f = x\longmapsto \frac{ax+b}{cx+d}.\]
\item We consider, for all $\nu\in \mathbb{R}$, the function $x\mapsto x^\nu$ on $\mathbb{R}_+^*$.
\begin{itemize}
\item If $\lvert \nu \rvert >1$, then $x\mapsto x^\nu$ is strictly $\gamma^{1/2}$-convex on $\mathbb{R}_+^*$.
\item If $\lvert \nu \rvert <1$, then $x\mapsto x^\nu$is strictly $\gamma^{1/2}$-concave on $\mathbb{R}_+^*$. 
\end{itemize}
\item $x\mapsto e^x$ is strictly $\gamma^{1/2}$-convex on $\mathbb{R}$.
\end{enumerate}
\end{ex}
\begin{rem}
According to Proposition \ref{compose}, all the functions of the form 
\[x\longmapsto \biggl(\frac{ax^\nu+b}{cx^\nu+d}\biggr)^\mu,\]
with $\nu, \mu\geq 1$ and $\nu >1$ or $\mu >1$ are strictly  $\gamma^{1/2}$-convex. Many common sigmoid functions are $\gamma^{1/2}$-convex, as shown in Appendix~\ref{AppC}. 
\end{rem}
\begin{proof}
\begin{enumerate}
\item  First, let us assume that $f : x \mapsto \frac{ax+b}{cx+d}$ (which is defined on $\mathbb{R}$ if $c=0$ and on $(-\infty, -d/c)\cup \, (-d/c, +\infty)$ otherwise).

Then, for any $x$ in its domain,
\[f'(x)=\frac{ad-cb}{(cx+d)^2}, \quad f''(x)=-2c\frac{ad-cb}{(cx+d)^3},\quad  f^{(3)}(x)=6c^2\frac{ad-cb}{(cx+d)^4}.\]
Thus, 
\[f'(x)f^{(3)}(x)-\frac{3}{2}f''(x)^2=0.\]
Hence, $f$ is indeed $\gamma^{1/2}$-convex and  $\gamma^{1/2}$-concave. 

Conversely, let us assume that $f$ is both $\gamma^{1/2}$-convex and  $\gamma^{1/2}$-concave, \textit{i.e.}, that $\frac{1}{\sqrt{\lvert f'\rvert}}$ is an affine function. Then, there exist $\alpha, \beta \in \mathbb{R} $ such that for all $x\geq 0$
$\frac{1}{\sqrt{\lvert f'(x)\rvert }}=\alpha x + \beta $. 
Thus, 
\[f'(x)=\pm \frac{1}{(\alpha x + \beta)^2}.\]
Integrating, we conclude that $f$ is of the announced form $x\mapsto \frac{ax+b}{cx+d}$.
\item We recall that the function $x\mapsto x^\beta$ is strictly convex on $\mathbb{R}_+^*$ if and only if $\beta>1$ or $\beta <0$ and strictly concave if and only if $\beta \in (0,1)$. 

Hence, since the derivative of $x\mapsto x^\nu$ is $x\mapsto \nu x^{\nu-1}$, $x\mapsto x^\nu$ is strictly $\gamma^{1/2}$-convex if and only if $\lvert \nu\rvert >1$.
Likewise, $x\mapsto x^\nu$ is strictly $\gamma^{1/2}$- concave if and only if $\lvert \nu\rvert <1$.
\item Since, for all $x\in \mathbb{R}$, 
\[\frac{1}{\sqrt{(e^x)'}}=e^{-\frac{1}{2}x},\]
the results immediately follows, owing to the strict convexity of $x\mapsto e^{-\frac{1}{2}x}$.
\end{enumerate}
\end{proof}

\paragraph{Non-local characterization.}
The $\gamma^{1/2}$-convexity/concavity of a function can surprisingly be written under a non-local form. Although this characterization is generally less suitable to check if a given function is $\gamma^{1/2}$-convex/concave, it can be useful to understand why the $\gamma^{1/2}$-convexity/concavity is especially adapted to our problem. Indeed, the fact that the  `$\gamma^{1/2}$-properties' are stable by composition and that they ensure that a function has at most three fixed points can easily be proven by considering this non-local form. 

\begin{prop}
Let $f\in C^3(I, \mathbb{R})$. Then 
\begin{itemize}
\item $f$ is $\gamma^{1/2}$-convex  (resp. $\gamma^{1/2}$-concave) if and only if for any $x,y\in I$, $x\neq y$,
\[f'(x)f'(y)\leq \biggl(\frac{f(x)-f(y)}{x-y}\biggr)^2 \quad (\text{resp.  } f'(x)f'(y)\geq \biggl(\frac{f(x)-f(y)}{x-y}\biggr)^2).\]
\item $f$ is strictly $\gamma^{1/2}$-convex (resp. strictly $\gamma^{1/2}$-concave)  if and only if for any $x,y\in I$, $x\neq y$,
\[f'(x)f'(y)< \biggl(\frac{f(x)-f(y)}{x-y}\biggr)^2 \quad (\text{resp. } f'(x)f'(y)> \biggl(\frac{f(x)-f(y)}{x-y}\biggr)^2).\]
\end{itemize}
\label{non-local}
\end{prop}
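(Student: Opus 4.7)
In both directions it suffices to treat $f' > 0$: indeed $\gamma_{-f}^{1/2} = \gamma_f^{1/2}$, and both sides of the stated non-local inequality are invariant under $f \mapsto -f$.

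\emph{Forward direction ($\gamma^{1/2}$-convexity $\Rightarrow$ non-local inequality).} Set $\phi := \gamma_f^{1/2} = 1/\sqrt{f'}$, so that $f' = 1/\phi^2$ and $f(y) - f(x) = \int_x^y ds/\phi(s)^2$. After taking square roots the claim rewrites as
\[
\int_x^y \frac{ds}{\phi(s)^2} \;\geq\; \frac{y-x}{\phi(x)\phi(y)}.
\]
I would establish this pointwise and then integrate: by convexity of $\phi$, for $s = (1-t)x + ty$ with $t \in [0,1]$, one has $\phi(s) \leq (1-t)\phi(x) + t\phi(y)$, so
\[
\frac{1}{\phi(s)^2} \;\geq\; \frac{1}{\bigl((1-t)\phi(x) + t\phi(y)\bigr)^2}.
\]
Integrating over $t \in [0,1]$ and using the explicit antiderivative computation $\int_0^1 dt/((1-t)a+tb)^2 = 1/(ab)$ (valid for all $a, b > 0$) yields the desired integrated inequality. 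In the strict case, $\phi(s) < (1-t)\phi(x) + t\phi(y)$ for $s \in (x,y)$, and the integrated inequality is then strict.

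\emph{Reverse direction (non-local inequality $\Rightarrow$ $\gamma^{1/2}$-convexity).} I would argue by Taylor expansion. Fix $x_0 \in I$ and let $u = y - x_0$; expanding $f(y)$ to order $4$ and $f'(y)$ to order $2$ around $x_0$, the contributions of order $u^2$ and $u^3$ in $(f(y)-f(x_0))^2$ and in $u^2 f'(x_0) f'(y)$ cancel, leaving
\[
(f(y) - f(x_0))^2 - u^2 f'(x_0)\, f'(y) \;=\; \tfrac{1}{12}\bigl(3 f''(x_0)^2 - 2 f'(x_0) f^{(3)}(x_0)\bigr)\, u^4 + O(u^5).
\]
The non-local inequality forces this quantity to be $\geq 0$ for both signs of $u$ in a neighbourhood of $0$, so the $u^4$ coefficient must be nonnegative, giving $f'(x_0)\, f^{(3)}(x_0) \leq \tfrac{3}{2} f''(x_0)^2$. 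By Lemma~\ref{lemma equivalence convexity}, this is exactly $\gamma^{1/2}$-convexity of $f$.

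For the strict converse, the same Taylor argument only gives $\gamma_f^{1/2}$ convex (not a priori strictly). If strictness failed, there would exist an open subinterval $J \subset I$ on which $f' f^{(3)} = \tfrac{3}{2} (f'')^2$, i.e.\ on which $\gamma_f^{1/2}$ is affine, and hence on which $f$ is homographic by item~1 of the Example following Proposition~\ref{compose}. A direct algebraic identity for homographic maps (expanding $f(y)-f(x)$ with a common denominator gives $(ad-bc)(y-x)/((cx+d)(cy+d))$, whose square equals $f'(x)f'(y)$) then yields equality $f'(x)f'(y) = ((f(y)-f(x))/(y-x))^2$ for $x,y \in J$, contradicting the strict non-local inequality.

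\emph{Main obstacle.} The only delicate step is this strict converse, where non-strict $\gamma^{1/2}$-convexity must be upgraded to affinity on an open interval and then combined with the homographic classification to produce a contradiction; the remaining three implications reduce to a Jensen-style integral computation and a single Taylor expansion.
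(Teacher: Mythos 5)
Your proposal is correct, and the forward direction follows a genuinely different route from the paper. The paper proves that $\gamma^{1/2}$-convexity implies the non-local inequality by composing $f$ with the homographic map $h_y(x)=\frac{f'(y)}{x-f(y)}$, invoking the stability of $\gamma^{1/2}$-convexity under composition (Proposition~\ref{compose}) to conclude that $\gamma_{h_y\circ f}^{1/2}(x)=\frac{\lvert f(x)-f(y)\rvert}{\sqrt{f'(x)f'(y)}}$ is convex on each side of $y$, and then reading off the inequality from the tangent lines of slope $\pm 1$ at $y$. Your Jensen-type argument --- writing $f(y)-f(x)=\int_x^y \phi(s)^{-2}\,ds$ with $\phi=1/\sqrt{f'}$, bounding $\phi$ by its chord, and using the exact identity $\int_0^1 dt/((1-t)a+tb)^2=1/(ab)$ --- is more elementary and self-contained, and it makes transparent why the exponent $1/2$ is the right one (it is exactly the exponent for which the chord integral collapses to the harmonic-mean expression $1/(ab)$); the paper's route instead leans on machinery it has already built and re-uses it. Your reverse direction (Taylor expansion isolating the coefficient $\tfrac{1}{12}(3f''^2-2f'f^{(3)})$ of $u^4$) and your treatment of the strict converse (equality on a subinterval forces $\gamma_f^{1/2}$ affine, hence $f$ homographic, hence equality in the non-local inequality) coincide with the paper's. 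Two minor points to tidy: for $f\in C^3$ the remainder in your expansion is $o(u^4)$ rather than $O(u^5)$, which is all the argument needs; and in the strict forward direction you should note that strict convexity of $\phi$ in the paper's sense ($\phi''>0$ on a dense subset) does yield the strict chord inequality, because a convex function attaining equality at an interior point of a chord is affine on that segment, which would force $\phi''\equiv 0$ there.
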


\begin{proof}
We only prove the $\gamma^{1/2}$-convex case.
Let us assume that $f$ is $\gamma^{1/2}$-convex, and let $y\in I$. We define $I_y^-=\{x\in I : x<y\}$ and  $I_y^+=\{x\in I : x>y\}$, and we introduce the homographic function
\[h_y(x):=\frac{f'(y)}{x-f(y)}\]
which is well defined on $(-\infty, f(y))$ and on $\mathopen{(}f(y), +\infty)$. As seen in the examples, and according to Proposition \ref{compose}, $h_y \circ f$ is $\gamma^{1/2}$-convex on $I_y^-$ and on $I_y^+$. 
In other words, $\gamma_{h_y\circ f}^{1/2}=\frac{1}{\sqrt{\lvert (h_y\circ f\rvert)'}}$ is convex on $I_y^-$ and on $I_y^+$, and for all $x\in I\backslash \{y\}$,
\[\gamma_{h_y\circ f}^{1/2}(x)=\frac{\lvert f(x)-f(y) \rvert }{\sqrt{f'(x)f'(y)}}.\]
We observe that $\gamma_{h_y\circ f}^{1/2}$ can be extended by continuity at $y$ by defining $\gamma_{h_y\circ f}^{1/2}(y)=0$.
We now assume that $f'>0$ (the case $f'<0$ is similar).  Then, for any $x \in I \backslash \{y\}$, 
\[{\gamma_{h_y\circ f}^{1/2}}'(x)=\mathrm{sgn}(x-y)\biggl(\sqrt{\frac{f'(x)}{f'(y)}}-(f(x)-f(y))\frac{f''(x)}{2(f'(x)f'(y))^{3/2}}\biggr).\]
Thus, 
\[{\gamma_{h_y\circ f}^{1/2}}'(x)\underset{x\rightarrow y^-}{\longrightarrow} -1 \quad \text{and} \quad {\gamma_{h_y\circ f}^{1/2}}'(x)\underset{x\rightarrow y^+}{\longrightarrow} 1.\]
Since $\gamma_{h_y\circ f}^{1/2}$ is convex on $I_y^-$ and on $I_y^+$, $\gamma_{h_y\circ f}^{1/2}$ lies above its tangent at $y$ on these two intervals, \textit{i.e.},
\begin{itemize}
\item  For all $x\in I_y^+$, 
$\gamma_{h_y\circ f}^{1/2}\geq -(x-y).$
\item For all $x\in I_y^-$, 
$\gamma_{h_y\circ f}^{1/2}\geq (x-y).$
\end{itemize}
Therefore, for all $x\in I\backslash \{y\},$
\[\frac{\lvert f(x)-f(y) \rvert }{\sqrt{f'(x)f'(y)}}\geq \lvert x-y \rvert,\]
which proves
\[f'(x)f'(y)\leq \biggl(\frac{f(x)-f(y)}{x-y}\biggr)^2.\]
If $f$ is strictly $\gamma^{1/2}$-convex, then these inequalities become strict, which proves the second point. 

Now, let us prove the converse assertion. 
We assume that for any $x,y\in I$, $x\neq y$,
\[f'(x)f'(y)\leq \biggl(\frac{f(x)-f(y)}{x-y}\biggr)^2.\]
Then for any $x>0, \varepsilon\in \mathbb{R}$ small enough, 
\[f'(x)f'(x+\varepsilon)-\biggl(\frac{f(x+\varepsilon)-f(x)}{\varepsilon}\biggr)^2\leq 0.\]
A Taylor-expansion of the left-hand side leads to
\[\varepsilon^2\left(\frac{1}{6}f'(x)f^{(3)}(x)-\frac{1}{4}f''(x)^2\right)+o(\varepsilon^2)\leq 0,\]
which implies the inequality
\[f'(x)f^{(3)}(x)\leq \frac{3}{2}f''(x)^2.\]
In the case where, for any $x\neq y$
\begin{equation*}
f'(x)f'(y)< \biggl(\frac{f(x)-f(y)}{x-y}\biggr)^2,
\end{equation*}
this inequality becomes strict in a dense subset of $I$. Indeed, let us assume that there exists a segment $J\subset I$ such that 
\[f'f^{(3)}= \frac{3}{2}f''^2 \quad \text{on } J. \]
Then, according to the examples, $f_{|J}$ is a homographic or an affine function, \textit{i.e.}, there exist $a,b,c,d \in\mathbb{R}$ such that:
\[\forall x\in J,\quad f(x)=\frac{ax+b}{cx+d}.\]
Then, for all $x,y\in J$, 
\[f'(x)f'(y)=\biggl(\frac{(ad-cb)}{(cx+d)(cy+d)}\biggr)^2=\biggl(\frac{f(x)-f(y)}{x-y}\biggr)^2,\]
which contradicts the strict inequality. 
\end{proof}
We now use this non-local characterization in order to prove the last result concerning the $\gamma^{1/2}$-convexity/concavity. 
\begin{prop}
A strictly monotonic function $f$ is (strictly) $\gamma^{1/2}$-convex if and only if  $f^{-1}$ is (strictly) $\gamma^{1/2}$-concave. 
\label{inverse}
\end{prop}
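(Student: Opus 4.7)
The plan is to deduce this symmetry statement directly from the non-local characterization of Proposition \ref{non-local}, via the change of variables $u = f(x)$, $v = f(y)$. This is a natural strategy because the non-local inequality is "almost" symmetric in $x$ and $f(x)$: the roles of $f'(x)f'(y)$ and of the difference-quotient squared get swapped when one passes to $f^{-1}$.

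First I would record the setup. Since $f \in C^3(I,\mathbb{R})$ is strictly monotonic with $f' \neq 0$ on $I$, the inverse $f^{-1} : f(I) \to I$ belongs to $C^3(f(I),\mathbb{R})$, is also strictly monotonic, and satisfies $(f^{-1})'(u) = 1/f'(f^{-1}(u))$. Note that $f'(x)f'(y)$ has the same sign (positive) regardless of whether $f$ is increasing or decreasing, and likewise for $(f^{-1})'(u)(f^{-1})'(v)$, so the squared-quotient form of Proposition \ref{non-local} applies cleanly in both cases.

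Next, suppose $f$ is $\gamma^{1/2}$-convex. By Proposition \ref{non-local}, for all distinct $x,y \in I$,
\[
f'(x) f'(y) \;\leq\; \left(\frac{f(x) - f(y)}{x - y}\right)^2.
\]
I would set $u = f(x)$, $v = f(y)$, which runs over all distinct pairs of $f(I)$ as $(x,y)$ ranges over distinct pairs of $I$. The left-hand side equals $1/\bigl[(f^{-1})'(u)(f^{-1})'(v)\bigr]$, and the right-hand side equals $\bigl[(u-v)/(f^{-1}(u)-f^{-1}(v))\bigr]^2$. Multiplying by the two positive quantities $(f^{-1})'(u)(f^{-1})'(v)$ and $\bigl[(f^{-1}(u)-f^{-1}(v))/(u-v)\bigr]^2$ yields
\[
(f^{-1})'(u)(f^{-1})'(v) \;\geq\; \left(\frac{f^{-1}(u) - f^{-1}(v)}{u - v}\right)^2,
\]
which by Proposition \ref{non-local} (applied to $f^{-1}$) is exactly the $\gamma^{1/2}$-concavity of $f^{-1}$. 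The converse implication is symmetric, since $(f^{-1})^{-1} = f$; and the strict versions follow by replacing $\leq, \geq$ with $<, >$ throughout, invoking the strict form of Proposition \ref{non-local}.

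I do not expect a genuine obstacle here: the whole content of the statement is encoded in the symmetric shape of the non-local inequality, and the only thing to be slightly careful about is the sign bookkeeping when $f$ is decreasing (handled by observing that the relevant products of derivatives are always positive, so squaring loses no information). A purely local alternative, computing $(f^{-1})'(f^{-1})^{(3)} - \tfrac{3}{2}[(f^{-1})'']^2$ in terms of $f', f'', f^{(3)}$, would also work but is notationally much heavier and less transparent than the non-local route above.
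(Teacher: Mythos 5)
Your proposal is correct and follows essentially the same route as the paper: both deduce the statement from the non-local characterization of Proposition \ref{non-local}, using $({f^{-1}})'(u)=1/f'(f^{-1}(u))$ and the substitution $u=f(x)$, $v=f(y)$ to flip the inequality. The sign bookkeeping you mention is the same implicit observation the paper relies on, so there is nothing to add.
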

\begin{proof}
By applying the formula for the derivative of an inverse function, we have, for any $x\neq y$, 
\[{f^{-1}}'(x){f^{-1}}'(y)=\frac{1}{f'(f^{-1}(x))f'(f^{-1}(y)).}\]
If $f$ is $\gamma^{1/2}$-convex, then, according to the non-local characterization,
\begin{align*}
f'(f^{-1}(x))f'(f^{-1}(y))\leq \biggl(\frac{f(f^{-1}(x))-f(f^{-1}(y))}{f^{-1}(x)-f^{-1}(y)}\biggr)^2 =\biggl(\frac{x-y}{f^{-1}(x)-f^{-1}(y)}\biggr)^2.
\end{align*}
Hence, 
\[{f^{-1}}'(x){f^{-1}}'(y)\geq \biggl(\frac{f^{-1}(x)-f^{-1}(y)}{x-y}\biggr)^2. \]
We prove the converse with the same method.  
\end{proof}
Let us finally come to the proof of our main theorem (\ref{theorem atmostbistable}), namely: 
\begin{theorem*}
Let $f, g \in C^3(\mathbb{R}_+, \mathbb{R}_+)$ be two functions such that $f'>0, g'>0$ or $f'<0, g'<0$ on $\mathbb{R}_+$. 
We recall that system $\eqref{initial system}$ refers to 
\begin{align*}
\begin{cases}
\dot x=\alpha f(y)-x\\
\dot y =\beta g(x)-y
\end{cases}.
\tag{\ref{initial system}}
\end{align*} 
Then
\begin{itemize}
\item If $f$ and $g$ are $\gamma^{1/2}$-convex, and (at least) one of these functions is strictly $\gamma^{1/2}$-convex, then for any $\alpha, \beta >0$ system \eqref{initial system} has at most three equilibria, and is either monostable or bistable. 
\item If $f$ and $g$  are $\gamma^{1/2}$-concave,  and (at least) one of these functions is strictly $\gamma^{1/2}$-concave, then for any $\alpha, \beta >0$, system \eqref{initial system} is monostable. 
\end{itemize}
\end{theorem*}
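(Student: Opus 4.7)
The plan is to reduce the equilibrium analysis to the scalar fixed-point problem studied in Proposition~\ref{Psi functions}. By the discussion at the start of Section~\ref{Section2}, the equilibria of \eqref{initial system} are in bijection with the fixed points of
\[
F := (\alpha f)\circ(\beta g) : \mathbb{R}_+ \to \mathbb{R}_+,
\]
and such a fixed point $\bar x$ corresponds to an asymptotically stable (resp.\ unstable) equilibrium precisely when $F'(\bar x) = \alpha\beta\, f'(\beta g(\bar x))\, g'(\bar x) < 1$ (resp.\ $> 1$), as read off the Jacobian computation. Since $f$ and $g$ are either both increasing or both decreasing, $F$ is strictly increasing in both cases, and $F$ is bounded and positive because at least one of $f, g$ is bounded.

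The key step is to transfer the $\gamma^{1/2}$-convexity/concavity hypothesis from $f$ and $g$ to $F$. For any constant $c>0$ one has $\gamma_{cf}^{1/2} = c^{-1/2}\, \gamma_f^{1/2}$, so scaling by $\alpha$ or $\beta$ preserves (strict) $\gamma^{1/2}$-convexity and (strict) $\gamma^{1/2}$-concavity. Proposition~\ref{compose} applied to $F = (\alpha f)\circ(\beta g)$ then gives that $F$ is $\gamma^{1/2}$-convex (resp.\ $\gamma^{1/2}$-concave) under the first (resp.\ second) hypothesis of the theorem, and strictly so as soon as one of $f, g$ is strictly $\gamma^{1/2}$-convex (resp.\ concave). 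Note that the choice $\alpha=1/2$ is exactly what makes the cross-term in Proposition~\ref{compose} vanish, and is what makes the whole approach possible.

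Next, apply Proposition~\ref{Psi functions} to $F$ with the test function $\Psi(t) = 1/\sqrt{t}$, which satisfies $\Psi(1)=1$, $\Psi \ge 1$ on $(0,1]$, $\Psi \le 1$ on $[1,+\infty)$, and $\Psi\circ F' = \gamma_F^{1/2}$. In the $\gamma^{1/2}$-convex case, strict convexity of $\gamma_F^{1/2}$ yields at most three fixed points. In the $\gamma^{1/2}$-concave case, strict concavity of $\gamma_F^{1/2}$ together with the positivity and boundedness of $F$ on $\mathbb{R}_+$ falls into the sharper part of the proposition and gives exactly one fixed point, proving monostability directly.

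It remains to establish the monostable/bistable dichotomy in the convex case. Since $F$ is strictly increasing, bounded, and satisfies $F(0) > 0$, the map $F - \mathrm{id}$ starts positive and tends to $-\infty$, hence vanishes an odd number of times; combined with the bound of three, this leaves exactly one or three fixed points. In the three-point case $x_1 < x_2 < x_3$, the alternation of signs of $F-\mathrm{id}$ on the intervals between the zeros forces $F'(x_1), F'(x_3) \le 1 \le F'(x_2)$, and the Jacobian criterion recalled above yields two stable equilibria flanking one unstable one, i.e.\ bistability; in the one-point case we get monostability. The only genuine subtlety, and the main obstacle in the argument, is excluding degenerate tangential crossings at which $F'(x_i) = 1$ in a non-transverse way: this is precisely where the strict $\gamma^{1/2}$-convexity is needed, as equality of $\gamma_F^{1/2}$ with an affine function on any subinterval is ruled out and thus the alternation of signs provides genuine stable/unstable alternation.
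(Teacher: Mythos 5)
Your reduction to the fixed points of $F=(\alpha f)\circ(\beta g)$, the observation that multiplication by positive constants preserves $\gamma^{1/2}$-convexity so that Proposition~\ref{compose} applies, the application of Proposition~\ref{Psi functions} with $\Psi(t)=1/\sqrt{t}$ to get at most three fixed points, and the treatment of the concave case via the sharper (bounded, positive) part of that proposition all coincide with the paper's proof and are correct.

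The gap is in your last paragraph, where you depart from the paper to establish ``at most two stable equilibria.'' Two claims there are false. First, ``$F-\mathrm{id}$ starts positive and tends to $-\infty$, hence vanishes an odd number of times'' fails without transversality: a tangential zero (where $F-\mathrm{id}$ touches $0$ without changing sign) is perfectly possible, so $F$ may have exactly two fixed points. Second, and more importantly, your proposed fix --- that strict $\gamma^{1/2}$-convexity excludes fixed points with $F'(x_i)=1$ --- is wrong. Strict convexity of $\gamma_F^{1/2}$ rules out $\gamma_F^{1/2}$ being affine on an interval, but it does not prevent the graph of $F$ from being tangent to the diagonal at an isolated point; indeed this is exactly what happens on the boundary of the bistability region in Figure~\ref{fig:paramspace}, for Hill functions, which are strictly $\gamma^{1/2}$-convex. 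At such a point the Jacobian has eigenvalues $0$ and $-2$ (since the eigenvalues are $-1\pm\sqrt{\alpha\beta f'g'}$), so the linearization criterion you invoke is inconclusive, and your stable/unstable alternation argument collapses precisely in the degenerate case you needed to handle. The paper sidesteps this entirely with a topological argument: every solution converges to one of the (at most three) equilibria, the basin of attraction of any asymptotically stable equilibrium is open, and $\mathbb{R}_+^2$ is connected, so three disjoint nonempty open basins cannot cover the phase space --- hence at most two stable equilibria, with no case analysis on $F'$ at the fixed points. You should replace your final paragraph by this argument (or otherwise supply a genuine treatment of the tangential case).
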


\begin{proof}
\begin{itemize}
\item As evidenced by the examples, composing with an affine function does not change the $\gamma^{1/2}$-convexity/concavity of the functions. Hence, according to Proposition  \ref{compose}, under the first hypothesis, $x\mapsto \alpha f(\beta g(x))$ is strictly $\gamma^{1/2}$-convex. According to Proposition \ref{Psi functions}, this function has at most three fixed points, which shows that system \eqref{initial system} has at most three equilibria. 

Moreover, it is well-known that the basins of attraction of stable equilibrium points are open sets. Indeed, if $\bar x_s$ is a stable equilibrium of a differential equation, then, by definition, there exists an open set $U$ which contains $\bar x_s$ and which is included in the basin of attraction. Let $x_0$ be a point of this basin. By denoting $\Phi_T$ the flow associated to this ODE at time $T$, we get $\Phi_T(x_0)\in U$ for $T$ large enough, and $\Phi_T^{-1}(U)$ is thus a neighborhood of $x_0$ which is included in the basin of attraction of $\bar x_s$. 

Hence the system cannot be tristable, and is thus at most bistable. 
\item Under this second hypothesis, $x\mapsto \alpha f(\beta g (x))$ is $\gamma^{1/2}$-concave. According to Proposition~\ref{Psi functions}, this proves that $x\mapsto \alpha f(\beta g(x))$ has a unique equilibrium, and thus that system \eqref{initial system} is monostable. 
\end{itemize}
\end{proof}

\subsection{An extension to cyclic $n$-dimensional ODEs}
As we have seen, the property satisfied by the $\gamma^{1/2}$-convex/concave functions provides an attractive way to check that the function has at most three equilibria. This result can be used to ensure that some ODE models of higher-dimension and of cyclic nature are at most bistable. 

We consider the ODE system
\begin{align}
\begin{cases}
\dot x_1&=f_1(x_n)-x_1\\
\dot x_2&=f_2(x_1)-x_2\\
\vdots\\
\dot x_n&=f_n(x_{n-1})-x_n
\end{cases},
\label{system n variables}
\end{align}
where $f_1, \ldots, f_n \in C^1(\mathbb{R}_+, \mathbb{R}_+)$ are some strictly monotonic functions.

As for the 2-dimensional case, we note that $(\bar{x}_1, \bar{x}_2,\ldots, \bar{x}_n)$ is an equilibrium of \eqref{system n variables} if and only if 
\begin{align*}
\begin{cases}
\bar{x}_1=f_1(\bar{x}_n)\\
\bar{x}_2=f_2(\bar{x}_1) \\
\vdots\\
\bar{x}_{n-1}=f_{n-1}(\bar{x}_{n-2})\\
\bar{x}_n=f_n\circ f_{n-1}\circ \ldots \circ f_1(\bar x_n).
\end{cases}
\end{align*}
Hence, the number of equilibria of~\eqref{system n variables} is equal to the number of fixed points of $f_n\circ f_{n-1}\circ \ldots\circ f_1$. Thus, if $f_n\circ f_{n-1}\circ \ldots\circ f_1$ is decreasing, \textit{i.e.}, if there is an odd number of decreasing functions among $f_1, \ldots, f_n$, then \eqref{system n variables} has a unique equilibria. Otherwise, the previous theoretical framework naturally leads to the following proposition: 

\begin{prop}
Let us assume that $f_1,.\ldots, f_n \in C^3(\mathbb{R}_+, \mathbb{R}_+)$ are monotonic functions such that for all $i\in \{1,\ldots, n\}$ $\lvert f_i'\rvert>0$ on $\mathbb{R}_+$, and that exactly an even number of these functions are decreasing. 

If these functions are all $\gamma^{1/2}$-convex (resp. $\gamma^{1/2}$-concave) and at least one of them is strictly $\gamma^{1/2}$-convex (resp. $\gamma^{1/2}$-concave), then \eqref{system n variables} has a most three (resp. one) equilibria. Moreover, under the weak hypothesis 
\[ \text{for all $x$ fixed point of $f_1 \circ  \ldots\circ f_n$, \quad $(f_1 \circ  \ldots\circ f_n)'(x)\neq 1$},\] 
system~\eqref{system n variables} is at most bistable. 
\end{prop}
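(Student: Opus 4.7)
The plan is to reduce everything to the scalar map $F := f_n \circ f_{n-1} \circ \cdots \circ f_1$, whose fixed points are in one-to-one correspondence with the equilibria of~\eqref{system n variables}, as already noted. Since an even number of the $f_i$ are decreasing, $F$ is strictly increasing with $F' > 0$. I would first apply Proposition~\ref{compose} inductively along the chain $f_1, f_2 \circ f_1, \ldots, F$ to conclude that $F$ is $\gamma^{1/2}$-convex (resp.\ $\gamma^{1/2}$-concave), strictly so as soon as one factor is. Proposition~\ref{Psi functions} (with $\Psi(t) = 1/\sqrt{t}$) then gives at most three fixed points of $F$ in the convex case, hence at most three equilibria of~\eqref{system n variables}; in the concave case it gives a unique fixed point, provided $F$ is bounded --- a mild additional hypothesis (satisfied as soon as one of the $f_i$ is bounded) that should be added in the spirit of Section~\ref{Section2}.

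The second step is the stability analysis needed for the sharper ``at most bistable'' statement. Writing the system as $\dot z = \Phi(z) - z$ with $\Phi(z)_i = f_i(z_{i-1})$ (indices cyclic modulo $n$), the Jacobian at an equilibrium $\bar z$ is $J = -I + N$, where $N$ is the cyclic matrix with $N_{i,i-1} = f_i'(\bar x_{i-1})$ and $N_{1,n} = f_1'(\bar x_n)$, zero elsewhere. A direct computation of eigenvectors of $N$ as geometric sequences closing up cyclically yields $\mu^n = \prod_i f_i'(\bar x_{i-1}) = F'(\bar x_n) > 0$, so the eigenvalues of $J$ are $\lambda_k = -1 + F'(\bar x_n)^{1/n} e^{2\pi i k/n}$ for $k = 0, \ldots, n-1$. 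The maximal real part is $-1 + F'(\bar x_n)^{1/n}$, so the equilibrium is asymptotically stable when $F'(\bar x_n) < 1$ and linearly unstable when $F'(\bar x_n) > 1$, exactly mirroring the two-dimensional analysis.

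Finally, I would assemble these pieces using Proposition~\ref{prop fixed_point_derivative}. Under the transversality hypothesis $F'(\bar x) \neq 1$ at each fixed point, a continuous, increasing, bounded $F$ mapping $\mathbb{R}_+$ to itself with at most three fixed points must alternate its crossings of the diagonal from above, below, above, yielding at most two stable and one unstable equilibria of~\eqref{system n variables}, hence at most bistability. The main obstacle I anticipate is the eigenvalue computation for the cyclic matrix $N$: its structure is clean but sign and index conventions need care, and one must verify that the eigenvalue of maximal real part is indeed $\lambda_0$, which relies on $F'(\bar x_n) > 0$. Everything else is a direct assembly of Propositions~\ref{compose}, \ref{Psi functions} and~\ref{prop fixed_point_derivative} applied to the scalar composite $F$.
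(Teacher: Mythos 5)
Your proposal is correct and follows essentially the same route as the paper: reduce to the scalar composite $F=f_n\circ\cdots\circ f_1$, invoke Propositions~\ref{compose} and~\ref{Psi functions} for the bound on the number of fixed points, and then show that any equilibrium where the product of the $f_i'$ exceeds $1$ is linearly unstable via the cyclic Jacobian, whose characteristic polynomial the paper writes directly as $(X+1)^n-\prod_i f_i'$, yielding the same real eigenvalue $\bigl(\prod_i f_i'\bigr)^{1/n}-1>0$ that your geometric-sequence eigenvector computation produces. Your remark that the ``exactly one equilibrium'' conclusion in the concave case needs $F$ bounded (e.g.\ one $f_i$ bounded) is a fair point that the paper's proof glosses over, but it does not change the argument.
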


\begin{proof}
According to Propositions \ref{Psi functions} and \ref{compose}, $f_n\circ\ldots \circ f_1$ has at most three equilibria. Moreover, if it has three equilibria, then, according to Proposition \ref{prop fixed_point_derivative}, there exists $\bar x:=(\bar{x}_1, \ldots, \bar{x}_n)$ such that 
\[f_1'(\bar{x}_n)f_2'(\bar{x}_1) \ldots f_n'(\bar{x}_{n-1})\geq 1.\]
The Jacobian matrix of the right-hand side of \eqref{system n variables} in $\bar{x}$ is 
\begin{align*}
J_{\bar x}=
\begin{pmatrix}
-1 & 0 & \cdots & 0 &   f_1'(\bar{x}_n)\\
f_2'(\bar{x}_1) & -1 & 0 &  \cdots & 0\\
0 & f_3'(\bar{x}_2) & -1 & \ddots& \vdots\\
\vdots & \ddots & \ddots & \ddots & 0\\
0 & 0 &0  & f_n'(\bar{x}_{n-1}) & -1
\end{pmatrix}.
\end{align*}
Thus, its characteristic polynomial is 
\[(X+1)^n-f_1'(\bar{x}_n)f_2'(\bar{x}_1)\ldots f_n'(\bar{x}_{n-1}).\]
Then $\bigl( f_1'(\bar{x}_n)f_2'(\bar{x}_1)\ldots f_n'(\bar{x}_{n-1})\bigr)^{1/n}-1>0$ is an eigenvalue of $J_{\bar x}$, which proves that this point is asymptotically unstable. Hence, system \eqref{system n variables} has indeed at most two stable equilibria. 
\end{proof}
Let us remark that, as soon as $n\geq 3$, we cannot a priori assert that any solution will converge to some equilibrium point. In fact, as shown in \cite{buse2010dynamical} for $n=3$, some trajectories may be periodic.

\section{Determining the parameters for mono/bistability}
\label{Section4}

In the previous section, we have shown that, under general hypotheses on $f$ and $g$, the system
\begin{align*}
\begin{cases}
\dot x=\alpha f(y)-x\\
\dot y =\beta g(x)-y
\end{cases}
\tag{\ref{initial system}}
\end{align*}
is either monostable or bistable. The purpose of this section is, once the functions $f, g$ are fixed, to determine which parameters $\alpha, \beta$ induce a monostable system, and which ones induce a bistable system. 

To answer this question, we introduce a general framework that will encompass this particular result. Using this framework, we will also recover the result of the third section, which states that, under some hypotheses, system \eqref{initial system} cannot have more than two stable equilibria. 

\subsection{General framework}

We introduce a family of sets: for $n\in \mathbb{N}^*$, let
$$E_n:=\biggl\{(x,y)\in (\mathbb{R}_+^{n})^2 :\, \forall i\neq j,  \frac{x_i}{f(y_i)}=\frac{x_j}{f(y_j)}, \frac{y_i}{g(x_i)}=\frac{y_j}{g(x_j)}, \, x_i\neq x_j\biggr\}. $$
Notice that, since $f$ and $g$ are one-to-one, $E_n$ can also be written  
$$E_n=\biggl\{(x,y)\in (\mathbb{R}_+^{n})^2 :\, \forall i\neq j,  \frac{x_i}{f(y_i)}=\frac{x_j}{f(y_j)}, \frac{y_i}{g(x_i)}=\frac{y_j}{g(x_j)},\, y_i\neq y_j\biggr\}. $$
By convention, we take $E_1=\mathbb{R}_+^2$.
We also introduce two subsets of the above sets $E_n$:
\[E_n^>:=\biggl\{(x,y)\in E_n : \underset{i\in \{1,\ldots,n\}}\min\biggl(\frac{x_ig'(x_i)}{g(x_i)}\frac{y_if'(y_i)}{f(y_i)} \biggr)>1 \biggr\}\]
\[E_n^\geq:=\biggl\{(x,y)\in E_n : \underset{i\in \{1,\ldots,n\}}\min\biggl(\frac{x_ig'(x_i)}{g(x_i)}\frac{y_if'(y_i)}{f(y_i)} \biggr)\geq1 \biggr\}\]
We note that, in the particular case $n=1$, $E_1^\geq $ is the closure of $E_1^>$.

We also introduce 
\begin{align*}
G_n : E_n &\longrightarrow \mathbb{R}_+^2\\
 (x,y)&\longmapsto \biggl(\frac{x_i}{f(y_i)}, \frac{y_i}{g(x_i)}\biggr), 
\end{align*}
for some $i\in \{1,\ldots, n\}$. Remark that by the definition of $E_n$, the function $G_n$ is well-defined since the choice of $i$ is arbitrary.

The following property explains how the sets $E_n^>$ and $E_n^\geq$ and the function $G_n$ can be used to study the number of equilibria of \eqref{initial system}. \\
\begin{prop}
Let $n\in \mathbb{N}^*$. 
\begin{itemize}
\item If $(\alpha, \beta)\in G_n(E_n^>)$, then system $\eqref{initial system}$ has at least $2n+1$ equilibria. 
\item If system $\eqref{initial system}$ has $2n$ equilibria or more, then $(\alpha, \beta)\in G_n({E_n^\geq})$ .
\end{itemize}
\label{E_n}
\end{prop}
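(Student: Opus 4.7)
The proof plan rests on the identification between equilibria of \eqref{initial system} and fixed points of the map
\[F : x \longmapsto \alpha f(\beta g(x)),\]
which was already observed in Section~\ref{Section2}. Concretely, $(\bar x,\bar y)$ is an equilibrium iff $\frac{\bar x}{f(\bar y)}=\alpha$ and $\frac{\bar y}{g(\bar x)}=\beta$, so an unordered tuple of $n$ distinct equilibria for some parameter pair $(\alpha,\beta)$ corresponds precisely to an element $(x,y)\in E_n$ together with the identity $G_n(x,y)=(\alpha,\beta)$. Regardless of whether the system is cooperative or competitive, $F$ is positive, bounded, and increasing, so Proposition~\ref{prop fixed_point_derivative} and Corollary~\ref{cor fixed points f positive} apply to it.

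The key computation, which I would carry out once at the beginning, is that at any fixed point $\bar x$ of $F$ (with $\bar y=\beta g(\bar x)$), the chain rule yields
\[F'(\bar x)=\alpha\beta\, g'(\bar x)f'(\bar y)=\frac{\bar x}{f(\bar y)}\cdot\frac{\bar y}{g(\bar x)}\cdot g'(\bar x)f'(\bar y)=\frac{\bar x\, g'(\bar x)}{g(\bar x)}\cdot\frac{\bar y\, f'(\bar y)}{f(\bar y)}.\]
So the condition defining $E_n^>$ (resp.\ $E_n^\geq$) is exactly the condition $F'(x_i)>1$ (resp.\ $F'(x_i)\geq 1$) at each of the $n$ fixed points recorded by the tuple.

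For the first bullet, I take $(\alpha,\beta)\in G_n(E_n^>)$ and pick $(x,y)\in E_n^>$ in its preimage. Its $n$ coordinates give $n$ distinct fixed points of $F$ at each of which $F'>1$. The converse part of Corollary~\ref{cor fixed points f positive}, applied to $F$, then immediately yields at least $2n+1$ fixed points of $F$, hence at least $2n+1$ equilibria of \eqref{initial system}.

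For the second bullet, I assume \eqref{initial system} has at least $2n$ equilibria, i.e.\ $F$ has at least $2n$ fixed points. The direct part of Proposition~\ref{prop fixed_point_derivative} furnishes $n$ distinct fixed points $x_1^+<\dots<x_n^+$ with $F'(x_i^+)\geq 1$. Setting $y_i^+:=\beta g(x_i^+)$, the key identity above shows $\frac{x_i^+ g'(x_i^+)}{g(x_i^+)}\cdot\frac{y_i^+ f'(y_i^+)}{f(y_i^+)}\geq 1$, and by construction $\frac{x_i^+}{f(y_i^+)}=\alpha$, $\frac{y_i^+}{g(x_i^+)}=\beta$ for all $i$, so $(x^+,y^+)\in E_n^\geq$ and $G_n(x^+,y^+)=(\alpha,\beta)$. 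I do not foresee any real obstacle here: the argument is essentially dictionary-building between the two viewpoints, with the only tiny care needed being to confirm that $F$ is increasing in both the cooperative and competitive regimes (it is, being in the latter case the composition of two decreasing maps).
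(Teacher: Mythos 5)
Your proof is correct and follows essentially the same route as the paper's: translate equilibria into fixed points of $F:x\mapsto\alpha f(\beta g(x))$, note that $F'(\bar x)=\alpha\beta f'(\bar y)g'(\bar x)$ equals the product appearing in the definition of $E_n^>$ and $E_n^\geq$, and invoke Corollary~\ref{cor fixed points f positive} for the first bullet and Proposition~\ref{prop fixed_point_derivative} for the second. Your write-up merely makes explicit the chain-rule identity that the paper leaves implicit.
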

Here is a direct and useful consequence of this proposition:
\begin{cor}
If $E_n^>=\emptyset$, then for any choice of $\alpha$, $\beta$, system \eqref{initial system} has at most $2n-1$ equilibria.  
\label{cor empty}
\end{cor}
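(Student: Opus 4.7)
The plan is to prove the contrapositive: assume some $\alpha,\beta>0$ yield at least $2n$ equilibria of~\eqref{initial system}, and show that $E_n^>\neq\emptyset$. The equilibria are in bijection with the fixed points of $F(x):=\alpha f(\beta g(x))$, a $C^1$ strictly monotonic bounded map with $F(0)>0$ (since $f,g>1$ or $f,g<1$ on $\mathbb{R}_+$). Hence $F-\mathrm{id}$ is strictly positive at $0$ and tends to $-\infty$ at infinity, so the number of its isolated zeros is odd by parity of sign changes. The hypothesis therefore in fact delivers $\geq 2n+1$ fixed points $x_1<x_2<\cdots<x_{2n+1}$.

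Because $F-\mathrm{id}$ alternates sign on consecutive intervals, each even-indexed fixed point $x_{2k}$ ($1\leq k\leq n$) is a crossing from below to above, so $F'(x_{2k})\geq 1$. Setting $y_{2k}:=\beta g(x_{2k})$, one has $\alpha=x_{2k}/f(y_{2k})$ and $\beta=y_{2k}/g(x_{2k})$ for every $k$, so the tuple $(x_2,y_2,\dots,x_{2n},y_{2n})$ lies in $E_n$. The chain-rule identity
\[F'(x_{2k}) \;=\; \frac{x_{2k}\,g'(x_{2k})}{g(x_{2k})}\cdot\frac{y_{2k}\,f'(y_{2k})}{f(y_{2k})}\]
then places the tuple in $E_n^\geq$, which is exactly the content of Proposition~\ref{E_n}'s second bullet in the form we need.

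The delicate step, and the main obstacle, is to upgrade this non-strict membership to an element of $E_n^>$, since a degenerate fixed point $x_{2k}$ may satisfy $F'(x_{2k})=1$. In that case $x_{2k}$ is necessarily a cubic-tangent crossing, i.e.\ $F(x)-x\sim a(x-x_{2k})^3$ locally with some $a>0$. I would then perturb $\alpha\to\alpha+\varepsilon$: the nearby fixed point $x_{2k}^\varepsilon$ of the perturbed map sits at distance of order $\varepsilon^{1/3}$ and a Taylor expansion gives $F_{\alpha+\varepsilon}'(x_{2k}^\varepsilon)-1\sim 3a(\varepsilon c/a)^{2/3}>0$, with $c=f(\beta g(x_{2k}))>0$. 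The already-hyperbolic fixed points of $F$ persist under this small perturbation, so the perturbed system still has at least $2n+1$ equilibria, now with strict inequality at every even-indexed one. The perturbed tuple thus lies in $E_n^>$, contradicting its emptiness. The exceptional configuration where $F-\mathrm{id}$ vanishes on an entire interval pins $f$ and $g$ to a rigid pointwise identity and can be dismissed either directly or as a limit of regular configurations; apart from this, the whole argument reduces to Proposition~\ref{E_n} combined with the parity observation.
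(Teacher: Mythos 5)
You have correctly located the real difficulty: the second bullet of Proposition~\ref{E_n} only delivers $(\alpha,\beta)\in G_n(E_n^\geq)$, hence $E_n^\geq\neq\emptyset$, and passing from this to $E_n^>\neq\emptyset$ is not automatic. Unfortunately, the bridge you build does not hold, for three reasons. First, the parity step is invalid: a zero of $F-\mathrm{id}$ need not be a sign change (it can be a tangency), so having $2n$ fixed points forces neither $2n+1$ fixed points nor the picture of alternating transversal crossings with the even-indexed ones going ``from below to above''; the correct statements are Proposition~\ref{prop fixed_point_derivative} and Corollary~\ref{cor fixed points f positive}, which only produce $n$ fixed points with $F'\geq 1$, with no crossing structure attached. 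Second, the claim that a fixed point with $F'=1$ is ``necessarily a cubic-tangent crossing'' is false --- it can be a quadratic tangency at which $F-\mathrm{id}$ does not change sign --- and in any case $f,g$ are only assumed $C^1$ in this framework, so the third-order Taylor expansion you invoke is not available. Third, the perturbation $\alpha\mapsto\alpha+\varepsilon$ is not controlled: since increasing $\alpha$ raises $F$ pointwise, a degenerate fixed point at which $F-\mathrm{id}$ has a local minimum equal to $0$ simply disappears, so the perturbed map may well have fewer than $n$ fixed points with $F'>1$ (or fewer than $2n$ fixed points altogether), and you cannot conclude that the perturbed tuple lies in $E_n^>$.

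For comparison, the paper offers no argument at all here: the corollary is presented as a direct consequence of Proposition~\ref{E_n}, i.e.\ of its second bullet, whose literal contrapositive is the statement with $E_n^\geq$ in place of $E_n^>$ (``if $E_n^\geq=\emptyset$ then \eqref{initial system} has at most $2n-1$ equilibria''). That weaker-hypothesis version is also the one the paper actually uses later, since in the $n=2$ application it is $E_2^\geq=\emptyset$ that is established. So the sound way to write this proof is to state and use the $E_n^\geq$ version, which needs none of your extra machinery; if you want the statement exactly as printed, with $E_n^>$, you need either an additional hypothesis ruling out the degenerate configurations or a genuinely different argument --- the perturbation sketch as written does not supply one.
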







\begin{proof}[Proof of Proposition \ref{E_n}]
Let us assume that $E_n^>\neq \emptyset$, and let $(\alpha, \beta) \in G_n(E_n^>)$. Then, there exists $(x, y) \in E_n$ such that, for all $i\in \{1,\ldots, n\}, $
\[\frac{ x_ig'(x_i)}{g( x_i)}\frac{ y_if'(y_i)}{f(y_i)}>1,\quad
\text{ and } \quad
\alpha=\frac{x_i}{f(y_i)}, \quad \beta=\frac{y_i}{g(x_i)}.
\]
In other words, all the points $(x_i, y_i)$ are  equilibria of the system, and satisfy
$\alpha \beta f'(y_i)g'(x_i)>1.$
According to Corollary \ref{cor fixed points f positive},  this proves that $x\mapsto \alpha f(\beta g(x))$ has at least $2n+1$ fixed points, and thus that  system \eqref{initial system} has at least $2n+1$ equilibria. 

Now, let us assume that system \eqref{initial system} has at least $2n$ equilibria. According to Proposition \ref{prop fixed_point_derivative},   there exist at least $n$ of these equilibria (denoted $(\bar x_1, \bar y_1), \ldots, (\bar x_n, \bar y_n))$ which satisfy 
$\alpha \beta f'(\bar y_i)g'(\bar x_i)\geq 1.$
Since each $(\bar x_i, \bar y_i)$ is an equilibrium of \eqref{initial system}, then for all $i\in\{1,\cdots,n\}$,
\begin{align*}
\alpha =\frac{\bar x_i}{f(\bar y_i)}, \quad \beta =\frac{\bar y_i}{g(\bar x_i)}, 
\end{align*}
which proves that $(\alpha, \beta)\in G_n\bigl({E_n^\geq}\bigr).$
\end{proof}

\subsection{Application to $n=1$: determining the parameters for mono/bistability}

\subsubsection{General statement}
In \cite{cherry2000make}, Cherry and Adler state conditions on $f$, $g$ which ensure that, for some values of $\alpha, \beta$, system \eqref{initial system} is multistable. As seen in Section \ref{Section2}, under the hypotheses of Theorem \ref{theorem atmostbistable}, this system is at most bistable, which means that  multistability actually simplifies to mere bistability. 

In the following theorem, we improve this result in order to identify, depending on $f$ and $g$, the spaces of parameters $(\alpha, \beta)$ for which the system is monostable or bistable. 
From now on, $\overline{A}$  denotes the closure of a set $A \subset \R^2$, and $A^C$ its complement. 
\begin{theorem}
Let us define 
\[E_1^>:=\biggl\{(x,y)\in \mathbb{R}_+^2 : \biggl(\frac{xg'(x)}{g(x)}\frac{yf'(y)}{f(y)} \biggr)>1 \biggr\}
\text{ and } 
G_1:
(x,y)\longmapsto \biggr(\frac{x}{f(y)}, \frac{y}{g(x)}\biggl) \text{ for }(x,y)\in \R_+^2.
\]
Let us assume that $f$ and $g$ are both $\gamma^{1/2}$-convex, and that at least one of them is strictly $\gamma^{1/2}$-convex. We have the following alternative:
\begin{itemize}
\item If $(\alpha, \beta)\in G_1(E_1^>)$, then \eqref{initial system} has exactly three equilibria, among which exactly two  stable equilibria.
\item If  $(\alpha, \beta)\in \overline{G_1(E_1^>)}^C$, then \eqref{initial system} has a unique equilibrium, which is stable. 
\end{itemize}
\label{theo three-one}
\end{theorem}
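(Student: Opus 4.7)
The plan is to combine Proposition~\ref{E_n} in its case $n=1$ with Theorem~\ref{theorem atmostbistable} to count equilibria, and then invoke the trace/determinant computation from Section~\ref{Section2} to assign stability. Write $F(x) := \alpha f(\beta g(x))$, so that an equilibrium $(\bar x, \bar y)$ of \eqref{initial system} corresponds to a fixed point $\bar x$ of $F$ (with $\bar y = \beta g(\bar x)$), and is stable if $F'(\bar x) = \alpha\beta f'(\bar y) g'(\bar x) < 1$ and unstable if $F'(\bar x) > 1$. Since $f$ and $g$ are $\gamma^{1/2}$-convex with at least one strictly so, and composition by affine maps preserves $\gamma^{1/2}$-convexity, Proposition~\ref{compose} yields that $F$ is strictly $\gamma^{1/2}$-convex, that is, $1/\sqrt{F'}$ is strictly convex on $\mathbb{R}_+$.

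\textbf{Case $(\alpha,\beta)\in G_1(E_1^>)$.} Proposition~\ref{E_n} gives at least three equilibria, while Theorem~\ref{theorem atmostbistable} gives at most three, so $F$ has exactly three fixed points $\bar x_1 < \bar x_2 < \bar x_3$. Since $F$ is bounded and $F(0) > 0$, the function $F-\mathrm{id}$ is positive on $[0,\bar x_1)$ and on $(\bar x_3,\infty)$, so $F'(\bar x_1) \leq 1$ and $F'(\bar x_3) \leq 1$. Applying the mean value theorem on $(\bar x_1,\bar x_2)$ and on $(\bar x_2,\bar x_3)$ produces two points $c_1, c_2$ at which $F' = 1$. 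To upgrade the inequalities at the $\bar x_i$'s to strict ones, I use that a strictly convex function takes each value at most twice, applied to $1/\sqrt{F'}$: any $\bar x_i$ with $F'(\bar x_i)=1$ would, together with $c_1$ and $c_2$, give three preimages of $1$, contradicting strict $\gamma^{1/2}$-convexity. Hence $F'(\bar x_1), F'(\bar x_3) < 1$ (two stable equilibria) and, by the same trick applied to $\bar x_2$, $F'(\bar x_2) > 1$ (one unstable equilibrium).

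\textbf{Case $(\alpha,\beta)\in \overline{G_1(E_1^>)}^C$.} Existence of at least one equilibrium follows from $F$ being continuous and bounded with $F(0) > 0$, so $F-\mathrm{id}$ changes sign. For uniqueness, observe that $G_1$ is continuous on $\mathbb{R}_+^2$ and $E_1^\geq = \overline{E_1^>}$, so $G_1(E_1^\geq) \subseteq \overline{G_1(E_1^>)}$; by the second part of Proposition~\ref{E_n}, two or more equilibria would force $(\alpha,\beta)\in G_1(E_1^\geq) \subseteq \overline{G_1(E_1^>)}$, contradicting the hypothesis. Finally, for the unique equilibrium $(\bar x,\bar y)$, using $\alpha = \bar x / f(\bar y)$ and $\beta = \bar y / g(\bar x)$,
\[
\alpha\beta f'(\bar y)g'(\bar x) = \frac{\bar x g'(\bar x)}{g(\bar x)} \cdot \frac{\bar y f'(\bar y)}{f(\bar y)};
\]
if this product were $\geq 1$, then $(\bar x,\bar y) \in E_1^\geq$ and $(\alpha,\beta)\in \overline{G_1(E_1^>)}$, again a contradiction. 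Hence the product is $< 1$ and the unique equilibrium is stable.

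The only slightly delicate step, and the one that really uses strict $\gamma^{1/2}$-convexity, is ruling out a degenerate $F'(\bar x_i) = 1$ in the three-equilibrium case: strict convexity of $1/\sqrt{F'}$ is exactly what caps the number of preimages of $1$ at two, while the mean value theorem already forces two such preimages in $(\bar x_1,\bar x_3)$. Everything else is bookkeeping on Proposition~\ref{E_n} combined with the continuity of $G_1$ and the identity $E_1^\geq = \overline{E_1^>}$.
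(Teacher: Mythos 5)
Your proof is correct and follows essentially the same route as the paper, whose own proof is the single line ``direct consequence of Proposition \ref{E_n} with $n=1$'': you have simply made explicit the ingredients that line leaves implicit, namely Theorem \ref{theorem atmostbistable} for the upper bound of three equilibria, the stability criterion $F'(\bar x)<1$ versus $F'(\bar x)>1$ from Section \ref{Section2}, and the identification $E_1^\geq=\overline{E_1^>}$ that the paper itself asserts. Two small slips worth fixing: since $F$ is bounded, $F-\mathrm{id}$ is \emph{negative} (not positive) on $(\bar x_3,+\infty)$, which is what actually yields $F'(\bar x_3)\leq 1$; and the preimage-counting ``trick'' at $\bar x_2$ only gives $F'(\bar x_2)\neq 1$, so to conclude $F'(\bar x_2)>1$ you should add that, by Proposition \ref{prop fixed_point_derivative}, at least one of the three fixed points satisfies $F'\geq 1$, and it cannot be $\bar x_1$ or $\bar x_3$.
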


\begin{proof}
This is a direct consequence of Proposition \ref{E_n}, applied with $n=1$.  
\end{proof}
\begin{rem}
We note that, if $E_1^>=\emptyset$, \textit{i.e.}, if 
\[\underset{x>0}{\sup}\biggl(\bigg\lvert\frac{xg'(x)}{g(x)}\; \bigg\rvert\biggr) \; \underset{y>0}{\sup}\biggl(\bigg\lvert\frac{yf'(y)}{f(y)}\bigg\rvert\biggr)<1,\]
then \eqref{initial system} is monostable for any $\alpha, \beta$, which is nothing but the theorem of Cherry and Adler in~\cite{cherry2000make}. 
\end{rem}

In general, this criterion does not lead to an explicit expression of $G_1(E_1^>)$, but it does for $E_1^>$. Hence, the set $G_1(E_1^>)$ is numerically easily derived.
These computations may be significantly facilitated by assuming that the system is symmetric, \textit{i.e.}, of the form 
\begin{align}
\begin{cases}
\dot x = \alpha f(y)-x\\
\dot y = \alpha f(x)-y.
\end{cases}
\label{sym system}
\end{align}
In this very special case, we are led to studying the fixed points of $x\mapsto \alpha f(\alpha f(x))$. Since any fixed point of $\alpha f$ is also a fixed point of $\alpha f \circ \alpha f$, this system has at least one `diagonal' equilibrium, \textit{i.e.}, an equilibrium of the form $(\bar x, \bar x)$. The search for fixed points is thus made much simpler, and we obtain  the following proposition.
\begin{prop}
Let us define
\[E_s^>:=\biggl\{x\in \mathbb{R}_+ : \left\lvert \frac{xf'(x)}{f(x)}\right\rvert >1\biggr\}\]
and 
\begin{align*}
G_s:E_s^>&\longrightarrow \mathbb{R}_+\\
x&\longmapsto \frac{x}{f(x)} .
\end{align*}
$G_s$ is increasing if $f$ is decreasing, and decreasing if $f$ is increasing. Moreover,
\begin{itemize}
\item If $\alpha \in G_s(E_s^>)$, then system \eqref{sym system} has three equilibria, and is bistable.
\item If $\alpha \in \overline{G_s(E_s^>)}^C$, then system \eqref{sym system} is monostable. 
\end{itemize}
\label{sym proposition}
\end{prop}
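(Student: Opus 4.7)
The plan is to reduce the analysis of the symmetric system to that of the scalar map $F(x)=\alpha f(\alpha f(x))$, whose fixed points are exactly the $x$-coordinates of the equilibria of \eqref{sym system}, and then to combine Proposition~\ref{E_n} with the at-most-bistability furnished by Theorem~\ref{theorem atmostbistable} (implicitly in force throughout this section).

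The monotonicity of $G_s$ follows from $G_s'(x)=(f(x)-xf'(x))/f(x)^2$: if $f$ is decreasing the numerator is positive on all of $\mathbb{R}_+$, so $G_s$ is increasing; if $f$ is increasing, the definition of $E_s^>$ forces $xf'(x)>f(x)>0$ on $E_s^>$, so $G_s'<0$ there. For the first bullet, if $\alpha=G_s(x_0)$ with $x_0\in E_s^>$, then $(x_0,x_0)\in E_1^>$ (specialized to $g=f$, $\beta=\alpha$), since $\bigl(x_0 f'(x_0)/f(x_0)\bigr)^2>1$, and $G_1(x_0,x_0)=(\alpha,\alpha)$. Proposition~\ref{E_n} with $n=1$ then yields at least three equilibria; Theorem~\ref{theorem atmostbistable} caps the count at three, so the system is bistable with $(x_0,x_0)$ as its unique saddle.

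For the second bullet I would argue the contrapositive: if \eqref{sym system} has at least two equilibria, then $\alpha\in\overline{G_s(E_s^>)}$. The key step is to show that any unstable equilibrium must lie on the diagonal. Off-diagonal equilibria come in symmetric pairs $(x^*,y^*),(y^*,x^*)$ with $y^*=\alpha f(x^*)$ and $x^*=\alpha f(y^*)$, and both $x^*$ and $y^*$ are fixed points of $F$ satisfying
\[F'(x^*)=\alpha^2 f'(x^*)f'(y^*)=F'(y^*).\]
Under $\gamma^{1/2}$-convexity, $F$ has at most three fixed points whose derivatives alternate $<1,>1,<1$ by Proposition~\ref{prop fixed_point_derivative}; since the two elements of a pair share the same $F'$, any such pair must consist of the two outer (stable) fixed points, forcing the middle unstable one to be a fixed point of $\alpha f$, i.e., a diagonal equilibrium $(x_d,x_d)$. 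Then $\alpha=G_s(x_d)$ and $F'(x_d)=\alpha^2 f'(x_d)^2>1$ give $|x_d f'(x_d)/f(x_d)|>1$, hence $x_d\in E_s^>$ and $\alpha\in G_s(E_s^>)$. The degenerate case of exactly two equilibria replaces these strict inequalities by equalities and yields $\alpha\in\overline{G_s(E_s^>)}$ by continuity of $G_s$.

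The main obstacle I expect is the identification of the unstable equilibrium with a diagonal one; once that is secured, the result drops out of Proposition~\ref{E_n}. The identity $F'(x^*)=F'(y^*)$ on a symmetric pair, combined with the alternation of derivatives of $F$ at its fixed points forced by $\gamma^{1/2}$-convexity, is the crucial ingredient.
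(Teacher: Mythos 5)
Your argument is correct in substance, but for the monostability direction it follows a genuinely different route from the paper's. The monotonicity of $G_s$ and the first bullet are handled essentially as in the paper (the paper phrases the first bullet through Theorem~\ref{theo three-one}, which is Proposition~\ref{E_n} with $n=1$, so this is the same argument). For the second bullet, however, the paper never reasons on the fixed points of $F=\alpha f\circ\alpha f$: it proves the set identity ``$(\alpha,\alpha)\in G_1(E_1^>)$ if and only if $\alpha\in G_s(E_s^>)$'' by exploiting the injectivity of $G_1$ together with the fact that $G_1(y,x)$ is the coordinate swap of $G_1(x,y)$, and then invokes Theorem~\ref{theo three-one} once more. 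Your route instead locates the unstable equilibrium on the diagonal via the identity $F'(x^*)=F'(y^*)$ for a symmetric pair; this is more informative dynamically, but the ``alternation $<1,>1,<1$'' you attribute to Proposition~\ref{prop fixed_point_derivative} is only the generic picture: that proposition yields non-strict inequalities, and tangential fixed points break strict alternation. A cleaner way to secure your key step is Proposition~\ref{non-local}: for an off-diagonal pair, strict $\gamma^{1/2}$-convexity gives
\[f'(x^*)f'(y^*)<\biggl(\frac{f(x^*)-f(y^*)}{x^*-y^*}\biggr)^2=\frac{1}{\alpha^2},\]
hence $F'(x^*)<1$, so every fixed point of $F$ with $F'\geq 1$ is automatically diagonal, with no case analysis and no appeal to alternation. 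Both your treatment of the boundary and degenerate cases and the paper's are equally terse (passing from the pointwise equivalence to the statement about closures is not spelled out in either), so on that score you are not worse off than the original; what your approach buys is an explicit description of where the saddle sits, at the cost of a case analysis the paper's set-theoretic shortcut avoids.
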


Using Theorem \ref{theo three-one}, this result can be easily proven: 
\begin{proof}
If $f$ is strictly decreasing, then $G_s$ is the product of two  positive and increasing functions, and is thus increasing. If $f$ is strictly increasing, since the derivative of $G_s$ is equal to 
\[x\mapsto \frac{f(x)-xf'(x)}{f(x)^2},\]
the results holds according to the definition of $E_s^>$. 

Let us now prove the second part of the proof. 
First, let us note that $(\alpha,\alpha) \in G_1(E_1^>)$ if and only if $\alpha\in G_s(E_s^>)$. Indeed, if $(\alpha,\alpha) \in G_1(E_1^>)$, then there exists $(x,y)\in E_1^>$ such that 
\[(\alpha,\alpha)=G_1(x,y).\]
Since $G_1(x,y)=G_1(y,x)$ for all $(x,y)$ and $G_1$ is injective, then $x=y$, which proves that 
\[(\alpha,\alpha)=G_1(x,x)=(G_s(x,x), G_s(x,x)).\]
The converse inclusion is obvious from this last equality. 
We conclude by applying Theorem~\ref{theo three-one}. \\
\end{proof}

\begin{rem}
In the case where the system is symmetric and cooperative (\textit{i.e.}, $f$ is increasing in \eqref{sym system}), all the equilibria are exactly the points of the form $(\bar{x}, \bar{x})$, where $\bar{x}$ is a root of $f_\alpha : x\mapsto \alpha f(x)$. 
Indeed, let us assume that there exists $\bar x$ a root of $f_\alpha \circ f_\alpha$ which is not a root of $f_\alpha$. Then, $\bar y :=f_\alpha(\bar x)\neq \bar x$ is also a root of  $f_\alpha \circ f_\alpha$. Let us assume that $\bar y<\bar x$ (the case $\bar x <\bar y$ is similar). Since $f_\alpha$ is increasing, $\bar x = f_\alpha(\bar y)<f_\alpha(\bar{x})=\bar y.$
This leads to a contradiction, which proves the result. 

On the contrary, if the system is competitive, there exists at most one `diagonal equilibrium', since $\alpha f$ has at most one fixed point if $f$ is decreasing. 
\end{rem}

\subsubsection{Examples with Hill functions}

We now illustrate the previous two theorems, in the case where $f$ and $g$ are two Hill functions (shifted or not), \textit{i.e.}, that $f$ and $g$ are defined on $\mathbb{R}_+$ by
\begin{align*}
f:z&\longmapsto \frac{1+\lambda_1 (\frac{z}{z_{01}})^a}{1+(\frac{z}{z_{01}})^a} &g:z&\longmapsto \frac{1+\lambda_2 (\frac{z}{z_{02}})^{b}}{1+(\frac{z}{z_{02}})^{b}},
\end{align*} 
with $\lambda_1, \lambda_2 \in [0,1)$, $a,b\geq 1$, $z_{01}, z_{02}>0.$

First, we note that, for all $\alpha, \beta>0$, 

\[
\begin{cases}
\alpha f(y)-x=0\\
\beta g(x)-y=0
\end{cases} \quad 
\text{ if and only if } \quad
\begin{cases}
\alpha\tilde{f}(\frac{y}{z_{01}})-x=0\\
\beta\tilde{g}(\frac{x}{z_{02}})-y=0
\end{cases},
\]
where 
\[
\tilde f:z\longmapsto \frac{1+\lambda_1 z^{a}}{1+z^{a}} \quad \text{ and } \quad \tilde g:z\longmapsto \frac{1+\lambda_2 z^{b}}{1+z^{b}}
\]
and that 
\[\alpha \beta f'(y)g'(x)=\bigl(\frac{\alpha}{z_{01}}\bigr)\bigl(\frac{\beta}{z_{02}}\bigr)\tilde{f}'\bigl(\frac{y}{z_{01}}\bigr)\tilde{g}'\bigl(\frac{x}{z_{02}}\bigr),\]
which shows that we can choose, without loss of generality, $z_{01}=z_{02}=1$. \\

For conciseness, we present the result only in the case where $\lambda_1=\lambda_2=0$: the complete result for general values of $\lambda_1, \lambda_2$ as well as the proof can be found in Appendix~\ref{AppA}. 

\begin{prop}
Let us assume that $\lambda_1=\lambda_2=0$ and let  $\rho:=ab$. 
If $\rho<1$, then $E_1^>=\emptyset$. \\
If $\rho\geq1$, we define
\begin{align*}
x^-:&=\biggl(\frac{1}{\rho-1}\biggr)^{1/b},\quad y^-:=\biggl(\frac{1}{\rho-1}\biggr)^{1/a},\\
r^-(x):&=\biggl(\frac{1+x^{b}}{(\rho-1)x^{b}-1}\biggr)^{1/a}, \quad s^-(y):=\biggl(\frac{1+y^{a}}{(\rho-1)y^{a}-1}\biggr)^{1/b}.
\end{align*}
Then
\begin{equation}
\begin{split}
E_1^>&=\{(x,y)\in (x^-, +\infty) \times (y^-, +\infty) : y>r^-(x)\}\\
&=\{(x,y)\in (x^-, +\infty) \times (y^-, +\infty) : x>s^-(y)\}.
\end{split}
\label{Hill_system}
\end{equation}
\end{prop}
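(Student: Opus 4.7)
The plan is to substitute the explicit Hill forms into the definition of $E_1^>$. After the normalization $z_{01}=z_{02}=1$ (already justified in the text preceding the statement) we have $f(y)=1/(1+y^a)$ and $g(x)=1/(1+x^b)$, and a direct differentiation yields
\[
\frac{yf'(y)}{f(y)} \;=\; -\frac{ay^a}{1+y^a}, \qquad \frac{xg'(x)}{g(x)} \;=\; -\frac{bx^b}{1+x^b}.
\]
Clearing denominators, the inequality $\bigl|\tfrac{xg'(x)}{g(x)}\tfrac{yf'(y)}{f(y)}\bigr|>1$ that defines $E_1^>$ collapses to the single polynomial condition
\[
(\rho-1)\,x^b y^a \;>\; 1 + x^b + y^a, \qquad \rho := ab,
\]
posed for $x,y>0$. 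So the entire proposition reduces to describing the positive solution set of this inequality.

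If $\rho<1$ (and even $\rho\le1$), the left-hand side is nonpositive while the right-hand side is at least $1$, so $E_1^>=\emptyset$. For $\rho>1$, I would view $y^a$ as the unknown and rewrite the inequality as
\[
\bigl((\rho-1)x^b - 1\bigr)\,y^a \;>\; 1 + x^b.
\]
Since the right-hand side is strictly positive, a positive solution $y$ exists only when the coefficient on the left is strictly positive, i.e.\ $x^b > 1/(\rho-1)$, equivalently $x>x^-$. Under that constraint, dividing gives exactly $y>r^-(x)$. A short computation of $r^-(x)^a - (y^-)^a$ shows $r^-(x)>y^-$ for every $x>x^-$, so the constraint $y>y^-$ appearing in the stated description is automatic, yielding the first formula for $E_1^>$. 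The second description is obtained by the identical manipulation with $x$ and $y$ swapped (and $a\leftrightarrow b$, $x^-\leftrightarrow y^-$, $r^-\leftrightarrow s^-$); both formulas describe the same subset of $\mathbb{R}_+^2$ because they both encode the symmetric polynomial inequality above.

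The only genuinely delicate point---and the closest thing here to an obstacle---is the sign of the coefficient $(\rho-1)x^b-1$ when solving for $y^a$. Were this coefficient nonpositive, division would reverse the inequality and the resulting condition would have no positive solution; it is this observation that forces the threshold $x>x^-$ rather than it being imposed by hand, and correspondingly explains why the $x$-isolated and $y$-isolated descriptions of $E_1^>$ land on the same set. Everything else is bookkeeping of $a$th and $b$th roots.
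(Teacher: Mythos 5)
Your proof is correct and follows essentially the same route as the paper: substituting the Hill forms, passing to the monomials $x^b$ and $y^a$, and reducing membership in $E_1^>$ to the polynomial inequality $(\rho-1)x^by^a>1+x^b+y^a$, which is then solved for one variable at a time. The paper writes out the details only for the general case $\lambda_1,\lambda_2>0$ (where the analogous polynomial is quadratic in each variable and requires a lemma on the signs of roots) and remarks that the case $\lambda_1=\lambda_2=0$ is ``similar and simpler''; your argument is precisely that simpler degenerate case, including the correct observation that the sign of the coefficient $(\rho-1)x^b-1$ is what forces the threshold $x>x^-$ and that $y>y^-$ is then automatic.
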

Together with Theorem \ref{theo three-one}, this allows us to identify where, in the parameter space defined by $(\alpha, \beta)$, we have bistability or monostability.
These results are illustrated by Figure~\ref{fig:paramspace} for various choices of $a$, $b$, $\lambda_1$ and $\lambda_2$.
  
\begin{figure}[h!]
\centering
\includegraphics[width=0.4\textwidth]{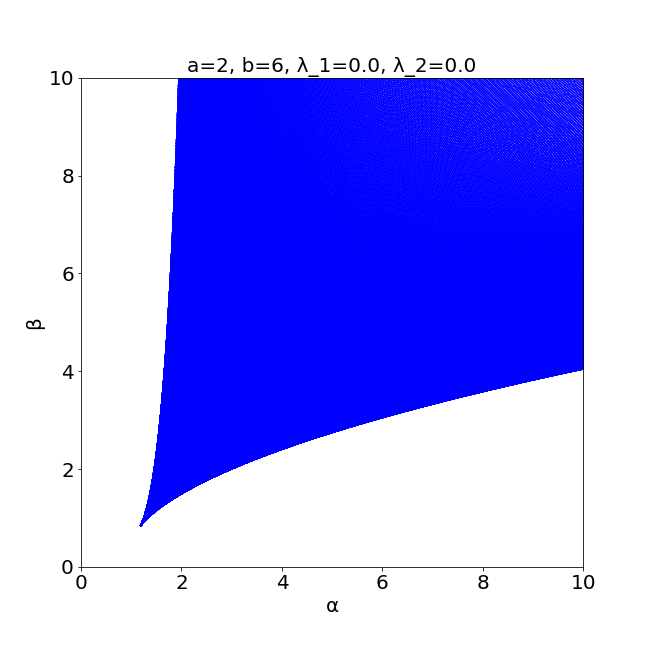}
\includegraphics[width=0.4\textwidth]{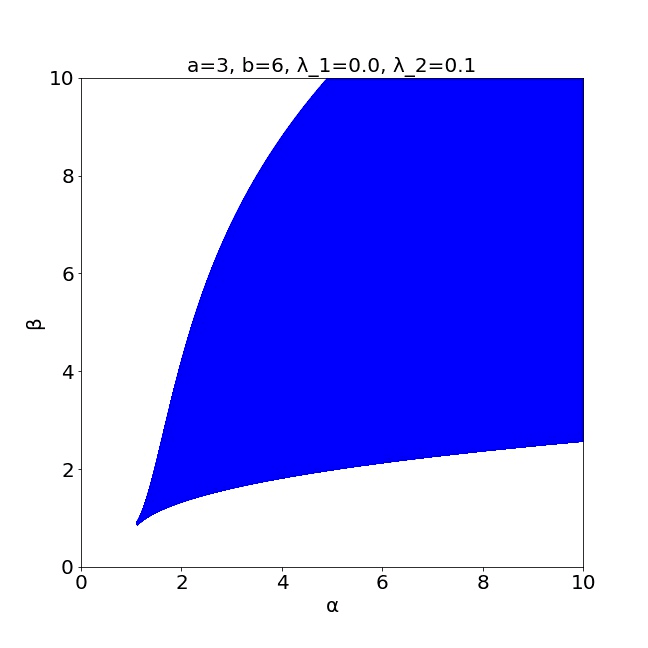}
\includegraphics[width=0.4\textwidth]{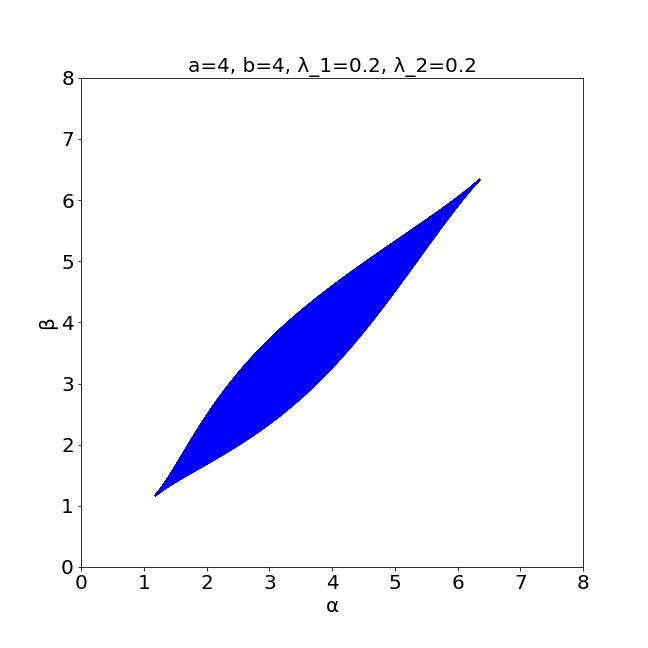}
\includegraphics[width=0.4\textwidth]{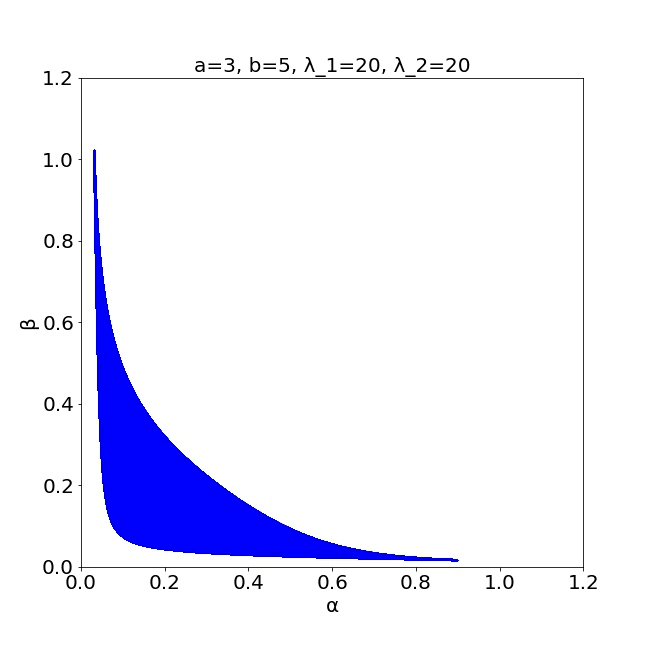}
\caption{The parameter spaces for system \eqref{initial system} with Hill functions. The blue fields represent the parameters $(\alpha, \beta)$ for which the system is bistable, and the white one the parameters for which it is monostable. The problem remains unsolved only for the values at the boundary of the blue field.  The parameters of the Hill functions are indicated above each graph. \\}\label{fig:paramspace}
\end{figure}

We now study the symmetric case. Still assuming that $f$ is a (shifted) Hill function, \textit{i.e.}, 
\begin{align*}
f :
z\mapsto \frac{1+\lambda (\frac{z}{z_0})^a}{1+(\frac{z}{z_0})^a},
\end{align*}
with $z_0>0$, $a>1$, $\lambda\in \mathbb{R}_+\backslash \{1\}$, we now consider that the system is symmetric. Then, with the notations of Proposition~\ref{sym proposition}, we can identify $E_s$ and $G_s(E_s)$. Again, for clarity of presentation and because this is the most standard case in applications, we assume that $\lambda=0$: the complete result for the other values of $\lambda$ and the proof are gathered in Appendix~\ref{AppB}. 
\begin{prop}
Let us assume that $\lambda=0$ and $a>1$. 
We define: 
$\alpha_0^-:=z_0\bigl(\frac{1}{a-1}\bigr)^{1/a}\frac{a}{a-1}.$
Then,
\[G_s(E_s^>)=(\alpha_0^-, +\infty).\]
\label{Hill_sym }
\end{prop}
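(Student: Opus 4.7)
The plan is a direct computation: identify $E_s^>$ explicitly as a half-line, then compute its image under $G_s$ using the monotonicity established in Proposition~\ref{sym proposition}.

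First, with $\lambda=0$ we have $f(z)=\dfrac{1}{1+(z/z_0)^a}$. A short computation gives
\[
\frac{xf'(x)}{f(x)} = -\frac{a(x/z_0)^a}{1+(x/z_0)^a},
\]
so the condition $\left|\frac{xf'(x)}{f(x)}\right|>1$ rearranges to $(a-1)(x/z_0)^a>1$. Since $a>1$, this is equivalent to $x>x^-:=z_0\bigl(\frac{1}{a-1}\bigr)^{1/a}$. Hence $E_s^>=(x^-,+\infty)$.

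Next, I invoke Proposition~\ref{sym proposition}: since $f$ is decreasing, $G_s$ is (strictly) increasing on $\mathbb{R}_+$. Explicitly,
\[
G_s(x)=\frac{x}{f(x)}=x\Bigl(1+(x/z_0)^a\Bigr),
\]
which is a continuous strictly increasing function on $\mathbb{R}_+$ with $G_s(x)\to +\infty$ as $x\to +\infty$. Therefore the image of the open half-line $(x^-,+\infty)$ is the open half-line $(G_s(x^-),+\infty)$.

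It only remains to check that $G_s(x^-)=\alpha_0^-$. Substituting,
\[
G_s(x^-) = z_0\Bigl(\tfrac{1}{a-1}\Bigr)^{1/a}\left(1+\tfrac{1}{a-1}\right) = z_0\Bigl(\tfrac{1}{a-1}\Bigr)^{1/a}\cdot\frac{a}{a-1} = \alpha_0^-,
\]
which yields $G_s(E_s^>)=(\alpha_0^-,+\infty)$. No real obstacle arises; the argument is entirely a bookkeeping calculation, made clean by the fact that for a pure (unshifted) Hill function the logarithmic derivative $xf'/f$ is monotonic in $x$, so $E_s^>$ is automatically an interval and the monotonicity of $G_s$ guarantees the image is again an interval with the stated endpoint.
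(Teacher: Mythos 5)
Your proof is correct and follows essentially the same route as the paper's: solve the inequality $\lvert xf'(x)/f(x)\rvert>1$ explicitly (the paper does this via the substitution $\omega=x^a$ and the sign of $P(\omega)=1-(a-1)\omega$, you do it directly) to get $E_s^>=(x^-,+\infty)$, then push forward by the increasing map $G_s$ and evaluate at the endpoint. The only cosmetic difference is that you carry $z_0$ through the computation rather than normalizing $z_0=1$ first as the paper does.
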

Figure \ref{fig:paramspace-sym} illustrates the symmetric case by showing the monostable and bistable regions of the parameter space $(\lambda,\alpha)$, for various choices of $a$. Notice that for $\lambda=0$, the stable region for $\alpha$ is indeed of the form $
(\alpha_0^-, +\infty)$, as shown in the previous proposition.

\begin{figure}[h!]
\centering
\includegraphics[width=0.45\textwidth]{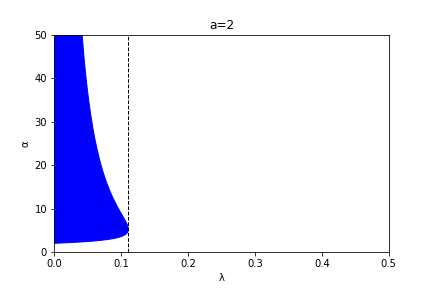}
\includegraphics[width=0.45\textwidth]{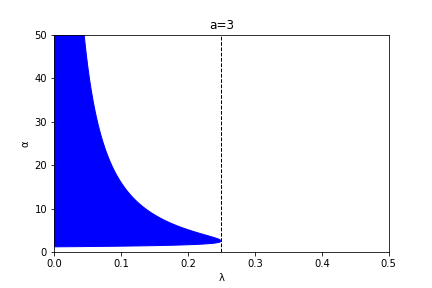}
\includegraphics[width=0.45\textwidth]{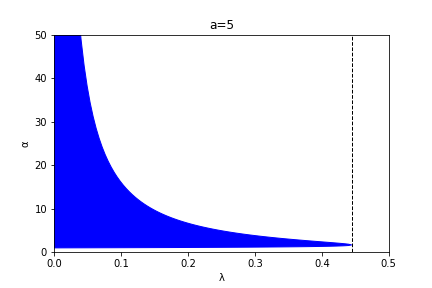}
\includegraphics[width=0.45\textwidth]{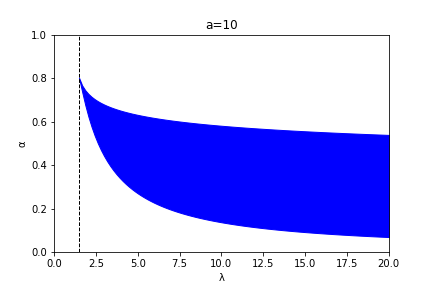}

\caption{The parameter spaces $(\lambda, \alpha)$ is the case of a symmetric system \eqref{sym system} with Hill functions. The blue set stands for the values of $(\lambda, \alpha)$ which ensure bistability, and the white fields represent the  parameters for which this system is monostable. As for the general case, the question of the number of equilibria remains unsolved for the values at the boundary of the blue field. 
All the Hill functions involved have a parameter $z_0$ equal to one; the value of the parameter $a$ is specified above each graph. The vertical dashed line indicates the highest $\lambda$ for which a symmetric system can be bistable (the smallest one on the fourth graphic, which exemplifies the  cooperative case). This value can be explicitly computed, as shown in the Appendix.\\}\label{fig:paramspace-sym}
\end{figure}

\subsection{Application to $n=2$}

We now apply Proposition \ref{E_n} in the specific case where $n=2$, in order to retrieve the result of the second section. 
When $n=2$, we get 
\[E_2=\biggl\{(x_1, x_2, y_1, y_2)\in \mathbb{R}_+^4 : \frac{x_1}{f(y_1)}= \frac{x_2}{f(y_2)}, \frac{y_1}{g(x_1)}=\frac{y_2}{g(x_2)}, \, x_1 \neq x_2\biggr\}, \]
and, since for any $(x_1, x_2, y_1, y_2)\in E_2$, 
\[\frac{x_i}{f(y_i)}= \frac{x_1-x_2}{f(y_1)-f(y_2)} \quad \text{and} \quad \frac{y_i}{g(x_i)}=\frac{y_1-y_2}{g(x_1)-g(x_2)}\]
we can rewrite 
\begin{align*}
E_2^\geq&=\biggl\{(x_1, x_2, y_1, y_2)\in E_2: \frac{\lvert x_1-x_2\rvert }{\lvert g(x_1)-g(x_2)\rvert } \frac{\lvert y_1-y_2\rvert }{\lvert f(y_1)-f(y_2)\rvert }\min\bigl(g'(x_1)f'(y_1), g'(x_2)f'(y_2) \bigr)\geq1 \biggr\}\\
&\subset \biggl\{(x_1, x_2, y_1, y_2)\in \mathbb{R}_+^4  \min\bigl(g'(x_1)f'(y_1), g'(x_2)f'(y_2) \bigr)\geq \frac{\lvert g(x_1)-g(x_2)\rvert}{\lvert x_1-x_2\rvert }\frac{\lvert f(y_1)-f(y_2)\rvert }{\lvert y_1-y_2\rvert } \biggr\}.
\end{align*}
Thus, if $f$ and $g$ are $\gamma^{1/2}$-convex, and one of these functions is strictly $\gamma^{1/2}$-convex, as seen with Proposition~\ref{non-local}, we get,  for any $(x_1, x_2, y_1, y_4)\in \mathbb{R}_+^4$, 
\[f'(y_1)f'(y_2)\leq \biggl(\frac{f(y_2)-f(y_1)}{y_2-y_1}\biggr)^2 \quad \text{and} \quad g'(x_1)g'(x_2)\leq \biggl(\frac{g(x_2)-g(x_1)}{x_2-x_1}\biggr)^2,\]
(and one of this inequality is in fact strict)
and thus
\[g'(x_1)f'(y_1)f'(y_2)g'(x_2)< \biggl(\frac{\lvert g(x_1)-g(x_2)\rvert}{\lvert x_1-x_2\rvert}\frac{\lvert f(y_1)-f(y_2)\rvert }{\lvert y_1-y_2\rvert }\biggr)^2,\]
which proves that 
\[g'(x_1)f'(y_1)<\frac{\lvert g(x_1)-g(x_2)\rvert}{\lvert x_1-x_2\rvert}\frac{\lvert f(y_1)-f(y_2)\rvert }{\lvert y_1-y_2\rvert } \quad \text{or} \quad g'(x_2)f'(y_2)<\frac{\lvert g(x_1)-g(x_2)\rvert}{\lvert x_1-x_2\rvert}\frac{\lvert f(y_1)-f(y_2)\rvert }{\lvert y_1-y_2\rvert}, \]
and therefore that $E_2^\geq=\emptyset$ under this hypothesis. 

%


\appendix
\section{Appendix}

\subsection{Complement of Proposition \ref{Hill_system}} \label{AppA}
Here is the complement and the proof of Proposition \ref{Hill_system}. 

\begin{prop}
We define:
$\phi:\lambda\mapsto\frac{1-\sqrt{\lambda}}{1+\sqrt{\lambda}}$ and 
$\rho:=ab\phi(\lambda_1)\phi(\lambda_2)$.
If $\rho<1$, then $E_1^>=\emptyset$. \\
If $\rho\geq1$, we define:
\begin{itemize}
\item If $\lambda_1>0$ and $\lambda_2>0:$
\begin{align*}
x^\pm:&=\biggl[\frac{1}{\sqrt{\lambda_2}}+\frac{1}{2\lambda_2}(1+\sqrt{\lambda_2})^2\sqrt{\rho-1}\biggl({\sqrt{\rho-1}\pm\sqrt{\rho-\phi({\lambda_2})^2}}\biggr) \biggr]^{1/b}\\
y^\pm:&=\biggl[\frac{1}{\sqrt{\lambda_1}}+\frac{1}{2\lambda_1}(1+\sqrt{\lambda_1})^2\sqrt{\rho-1}\biggl({\sqrt{\rho-1}\pm\sqrt{\rho-\phi(\lambda_1)^2}}\biggr)\biggr]^{1/a}\\
\alpha(x):&=(1+\lambda_1)-(1+\sqrt{\lambda_1})^2(1+\sqrt{\lambda_2})^2\rho\frac{x^{b}}{(1+x^{b})(1+\lambda_2x^{b})}\\
\beta(y):&=(1+\lambda_2)-(1+\sqrt{\lambda_1})^2(1+\sqrt{\lambda_2})^2\rho\frac{y^{a}}{(1+y^{a})(1+\lambda_1y^{a})}\\
r^\pm(x):&=\biggl[\frac{-\alpha(x)\pm\sqrt{\alpha(x)^2-4\lambda_1}}{2\lambda_1}\biggl]^{1/a}\\
s^\pm(y):&=\biggl[\frac{-\beta(y)\pm\sqrt{\beta(y)^2-4\lambda_2}}{2\lambda_2}\biggl]^{1/b}.
\end{align*}
Then, 
\begin{align*}
E_1^>&=\{(x,y)\in (x^-, x^+) \times (y^-, y^+) : r^-(x)<y<r^+(x)\}\\
&=\{(x,y)\in (x^-, x^+) \times (y^-, y^+) : s^-(y)<x<s^+(y)\}.
\end{align*}
\item If $\lambda_1=0$ and $\lambda_2>0:$
\begin{align*}
x^\pm:&=\biggl[\frac{1}{\sqrt{\lambda_2}}+\frac{1}{2\lambda_2}(1+\sqrt{\lambda_2})^2\sqrt{\rho-1}\biggl({\sqrt{\rho-1}\pm\sqrt{\rho-\phi({\lambda_2})^2}}\biggr) \biggr]^{1/b}\\
y^-:&=\bigg(\frac{1}{\rho-1}\bigg)^{1/a}\\
r^-(x):&=\biggl[\frac{(1+x^{b})(1+\lambda_2x^{b})}{-\lambda_2x^{2b}+(ab(1-\lambda_2)-(1+\lambda_2))x^{b}-1}\bigg]^{1/a}\\
\beta(y):&=(1+\lambda_2)-(1+\sqrt{\lambda_1})^2(1+\sqrt{\lambda_2})^2\rho\frac{y^{a}}{(1+y^{a})(1+\lambda_1y^{a})}\\
s^\pm(y):&=\biggl[\frac{-\beta(y)\pm\sqrt{\beta(y)^2-4\lambda_2}}{2\lambda_2}\biggl]^{1/b}
\end{align*}
\begin{align*}
E_1^>&=\{(x,y)\in (x^-, x^+) \times (y^-, +\infty) : y>r^-(x)\}\\
&=\{(x,y)\in (x^-, x^+) \times (y^-, +\infty) : s^-(y)<x<s^+(y)\}.
\end{align*}
\item If $\lambda_1=\lambda_2=0:$
\begin{align*}
x^-&:=\biggl(\frac{1}{\rho-1}\biggr)^{1/b}, \quad y^-:=\biggl(\frac{1}{\rho-1}\biggr)^{1/a}\\
r^-(x)&:=\biggl(\frac{1+x^{n_2}}{(\rho-1)x^{b}-1}\biggr)^{1/a}, \quad  s^-(y):=\biggl(\frac{1+y^{a}}{(\rho-1)y^{a}-1}\biggr)^{1/b}
\end{align*}
\begin{align*}
E_1^>&=\{(x,y)\in (x^-, +\infty) \times (y^-, +\infty) : y>r^-(x)\}\\
&=\{(x,y)\in (x^-, +\infty) \times (y^-, +\infty) : x>s^-(y)\}.
\end{align*}
\label{Hill_system_complete}
\end{itemize}
\end{prop}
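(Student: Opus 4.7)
The plan is to use the explicit form of Hill functions to rewrite the defining inequality of $E_1^>$ as two nested quadratic inequalities: an inner one for $y$ at fixed $x$, and an outer one for $x$ alone. A direct computation for a shifted Hill function $h(z)=(1+\lambda z^a)/(1+z^a)$ gives $zh'(z)/h(z)=a(\lambda-1)z^a/[(1+z^a)(1+\lambda z^a)]$, so, with the substitutions $u:=x^b$ and $v:=y^a$, the condition $(x,y)\in E_1^>$ becomes
\[
P(v)\,Q(u)<K,\quad P(v):=\tfrac{(1+v)(1+\lambda_1 v)}{v},\ \ Q(u):=\tfrac{(1+u)(1+\lambda_2 u)}{u},\ \ K:=ab(1-\lambda_1)(1-\lambda_2).
\]
The identity $(1+\sqrt{\lambda_1})^2(1+\sqrt{\lambda_2})^2\rho=K$ (with $\rho=ab\,\phi(\lambda_1)\phi(\lambda_2)$) will be the bridge between the algebra below and the closed-form expressions of the statement.

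Assume first $\lambda_1>0$. Then $P$ is strictly convex on $(0,+\infty)$ with minimum $(1+\sqrt{\lambda_1})^2$ attained at $1/\sqrt{\lambda_1}$, and likewise for $Q$. Fix $x$, hence $u=x^b$. The inner slice $\{v:P(v)<K/Q(u)\}$ is nonempty iff $Q(u)<K/(1+\sqrt{\lambda_1})^2=ab\,\phi(\lambda_1)(1-\lambda_2)$, in which case it is the open interval strictly between the two roots of $\lambda_1 v^2+\alpha(x)\,v+1=0$, with $\alpha(x):=(1+\lambda_1)-K/Q(x^b)$, which coincides with the stated $\alpha(x)$ via the basic identity. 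Solving this quadratic and taking $a$-th roots produces $r^\pm(x)$; the twin description via $s^\pm(y)$ follows from the identical argument after exchanging the roles of $f$ and $g$.

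It remains to determine for which $x$ the inner slice is nonempty. The condition $Q(u)<ab\,\phi(\lambda_1)(1-\lambda_2)$ is equivalent to $\lambda_2 u^2-Mu+1<0$ with $M:=ab\,\phi(\lambda_1)(1-\lambda_2)-(1+\lambda_2)$. Using $ab\,\phi(\lambda_1)=\rho/\phi(\lambda_2)$ together with $(1+\lambda_i)=(1+\sqrt{\lambda_i})^2-2\sqrt{\lambda_i}$, one rewrites
\[
M=(\rho-1)(1+\sqrt{\lambda_2})^2+2\sqrt{\lambda_2},\qquad M^2-4\lambda_2=(\rho-1)\bigl(\rho-\phi(\lambda_2)^2\bigr)(1+\sqrt{\lambda_2})^4,
\]
so solutions exist iff $\rho>1$, and the roots $u^\pm=x^{\pm b}$ take the closed form stated. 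The degenerate cases proceed by the same scheme: when $\lambda_1=0$, $P$ is no longer convex but is strictly decreasing with infimum $1$, so $\{v:P(v)<K/Q(u)\}$ becomes a half-line yielding $y>r^-(x)$; when additionally $\lambda_2=0$, $Q$ is also affine and the outer condition reduces to $u>1/(ab-1)$, producing $(x^-,+\infty)$. The main obstacle is purely computational: matching the roots of the two quadratics to the stated closed-form expressions in $\rho$ and $\phi(\lambda_i)$ requires the identity displayed above and careful bookkeeping of which root yields the lower and which the upper bound.
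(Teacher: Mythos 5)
Your proof is correct and follows essentially the same route as the paper's: the substitution $u=x^b$, $v=y^a$ reduces membership in $E_1^>$ to a quadratic inequality in $v$ whose coefficients depend on $u$, its solvability is governed by an outer quadratic in $u$, and your closed forms for $\alpha(x)$, $r^\pm$, $x^\pm$ and the discriminant identity $M^2-4\lambda_2=(\rho-1)(\rho-\phi(\lambda_2)^2)(1+\sqrt{\lambda_2})^4$ match the paper's computation. The only cosmetic difference is that you detect when the inner quadratic admits positive solutions via the minimum $(1+\sqrt{\lambda_1})^2$ of the convex function $v\mapsto \lambda_1 v+(1+\lambda_1)+1/v$, whereas the paper invokes the equivalent elementary lemma that $aX^2+bX+c$ with $a,c>0$ has two positive roots iff $b+2\sqrt{ac}<0$.
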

In order to prove this proposition, we will need this very simple lemma. 
\begin{lem}
Let $P(X):=aX^2+bX+c$ be a polynomial of degree two.
We assume that $a>0$ and $c>0$. If $P$ has two real roots, then they have the same sign. Moreover, $P$ has two positive roots if and only if $b+2\sqrt{ac}<0$. 
\label{poly}
\end{lem}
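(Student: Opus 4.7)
The plan is to apply Vieta's formulas directly. Denoting by $r_1, r_2$ the two real roots of $P$ (possibly equal), we have $r_1 r_2 = c/a$ and $r_1 + r_2 = -b/a$. Since $a > 0$ and $c > 0$, the product $r_1 r_2$ is positive, so $r_1$ and $r_2$ necessarily share the same sign. This settles the first assertion.

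For the second assertion, I would argue by equivalence. Both roots are positive if and only if $r_1 r_2 > 0$ (which is automatic from the hypotheses) together with $r_1 + r_2 > 0$, i.e., $b < 0$. The existence of two (distinct) real roots further requires the discriminant $b^2 - 4ac > 0$. Combining $b < 0$ with $b^2 > 4ac$ is equivalent to $-b > 2\sqrt{ac}$, that is $b + 2\sqrt{ac} < 0$. Conversely, if $b + 2\sqrt{ac} < 0$, then $b < -2\sqrt{ac} \leq 0$ and $b^2 > 4ac$, so $P$ has two distinct real roots whose sum and product are both positive, hence both roots are positive.

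There is essentially no obstacle here; the only point to flag is the strictness of the inequality, since the boundary case $b + 2\sqrt{ac} = 0$ corresponds to a double positive root $-b/(2a) = \sqrt{c/a}$, which is excluded by the statement's strict inequality and matches the interpretation "two distinct positive roots" used when the lemma is applied in the proof of Proposition \ref{Hill_system_complete}.
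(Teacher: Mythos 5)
Your proof is correct. The paper in fact states Lemma \ref{poly} without any proof, and your Vieta's-formulas argument (the product of roots $c/a>0$ forces a common sign; positivity of the sum together with a positive discriminant is equivalent to $b+2\sqrt{ac}<0$) is precisely the standard elementary argument the authors leave implicit, including the correct handling of the boundary case $b+2\sqrt{ac}=0$ as a positive double root.
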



\begin{proof}[Proof of Proposition \ref{Hill_system}]
Let $(x,y)\in \mathbb{R}_+^2.$
\[\frac{xg'(x)}{g(x)}\frac{yf'(y)}{f(y)}=ab(1-\lambda_1)(1-\lambda_2)\frac{x^by^a}{(1+x^b)(1+\lambda_2 x^{b})(1+y^{a})(1+\lambda_1 y^{a})}.\]
We denote
\begin{align*}
\omega:&=x^{b}\\
\delta:&=y^{a}\\
C:&=ab(1-\lambda_1)(1-\lambda_2)=(1+\sqrt{\lambda_1})^2(1+\sqrt{\lambda_2})^2\rho\\ 
C_1(\delta):&=(1+\delta)(1+\lambda_1 \delta)\\
C_2(\omega):&=(1+\omega)(1+\lambda_2 \omega)\\
 P(\omega,\delta):&=\lambda_1C_2(\omega)\delta^2+(C_2(\omega)(1+\lambda_1)-C\omega)\delta+C_2(\omega)\\
&=\lambda_2C_1(\delta)\omega^2+(C_1(\delta)(1+\lambda_2)-C\delta)\omega+C_1(\delta).
\end{align*}
Then, 
\begin{align*}
(x,y)\in E_1^> &\iff \frac{xg'(x)}{g(x)}\frac{yf'(y)}{f(y)} \iff P(\omega, \delta)<0.
\end{align*}
We now assume that $\lambda_1>0$ and $\lambda_2>0$: the proof for $\lambda_1=0$ or $\lambda_2=0$ is similar and simpler, as $P$ becomes a polynomial of degree 1 for the variable $\omega$ or $\gamma$. 

According to lemma \ref{poly}, there exist $\omega, \delta>0$ such that $P(\omega, \delta)<0$ if and only if
\begin{align*}
Q_2(\omega)&:=C_2(\omega)(1+\lambda_1)-C\omega+2\sqrt{\lambda_1}C_2(\omega)<0\\
&\iff C_2(\omega)(1+\sqrt{\lambda_1})^2-C\omega<0\\
&\iff\lambda_2\omega^2+(1+\lambda_2-\frac{C}{(1+\sqrt{\lambda_1})^2})\omega+1<0\\
&\iff \lambda_2\omega^2+(1+\lambda_2-(1+\sqrt{\lambda_2})^2\rho)\omega+1<0.
\end{align*}
According to lemma \ref{poly}, there exists $\omega>0$ such that $Q_2(\omega)<0$ if and only if
\begin{equation*}
(1+\sqrt{\lambda_2})^2-(1+\sqrt{\lambda_2})^2\rho<0 \iff \rho>1.
\end{equation*}
Therefore, $Q_2(\omega)<0$ if and only if $\rho>1$ and $\omega \in (\omega^-, \omega^+)$, where $\omega^-, \omega^+$ (which depend on $n_1$, $n_2$, $\lambda_1$ and $\lambda_2$), denote the two positive roots of $Q_2$. 

Lastly, $P(\omega,\delta)<0$ if and only if $\rho>1$, $\omega \in (\omega^-, \omega^+)$, and $\delta \in ({\tilde r}^-(\omega), {\tilde r}^+(\omega))$, where, for all $\omega \in (\omega^-, \omega^+)$, ${\tilde r}^-(\omega), {\tilde r}^+(\omega)$ denote the two positive roots of $\delta \mapsto P(\omega, \delta)$.

With the same reasoning, by denoting $\delta_-, \delta_+$ the two positive roots of $Q_1(\delta):=C_1(\delta)(1+\lambda_2)-C\delta+\sqrt{\lambda_2}C_1(\delta)<0$, and, for all $\delta \in (\delta^-, \delta^+)$, ${\tilde s}^-(\delta)$ and ${\tilde s}^+(\delta)$, the two positive roots of $\omega \mapsto P(\omega, \delta)$, $P(\omega,\delta)<0$ if and only if $\rho>1$, $\delta \in (\delta^-, \delta^+)$, and $\omega \in ({\tilde s}^-(\delta), {\tilde s}^+(\delta))$.

We denote
\begin{align*}
x^-:&=(\omega^-)^{1/b},  x^+:=(\omega^+)^{1/b}\\
y^-:&=(\delta^-)^{1/a}, y^+:=(\delta^+)^{1/a}\\
r^-(x):&=({\tilde r}^-(\omega))^{1/a}, r^+(\omega):=({\tilde r}^+(\omega))^{1/a}\\
s^-(y):&=({\tilde s}^-(\delta))^{1/b}, r^+(\delta):=({\tilde r}^+(\delta))^{1/b}.
\end{align*}
The values of $x^-, x^+, y^-, y^+, r^+, r^-, s^-$ and $s^+$ are directly computed from the formula of $Q_1$, $Q_2$ and $P$. 
\end{proof}

\subsection{Complement of Proposition \ref{Hill_sym }}\label{AppB}

\begin{prop}
Let us denote 
$\displaystyle\lambda_0^-:=\biggl(\frac{a-1}{a+1}\biggr)^2$ and  $\displaystyle \lambda_0^+=\biggl(\frac{a+1}{a-1}\biggr)^2$.
\begin{itemize}
\item If $\lambda\in (\lambda_0^-, \lambda_0^+)$, then $E_s=\emptyset$. In other words, system \eqref{sym system} is monostable, for any value of $g$. 
\item If $\lambda=0$, we define: 
$\displaystyle \alpha_0^-:=z_0\bigl(\frac{1}{a-1}\bigr)^{1/a}\frac{a}{a-1}.$ 
Then,
$E_s^>=(\alpha_0^-, +\infty).$
\item If $\lambda \in  (0, \lambda_0^-]$, we define:
\begin{align*}
\omega_\lambda^-&:=\frac{a-1-(a+1)\bigl(\lambda+\sqrt{(1-\lambda)(\lambda_0^--\lambda)}\bigr)}{2\lambda} \\
\omega_\lambda^+&:=\frac{a-1-(a+1)\bigl(\lambda-\sqrt{(1-\lambda)(\lambda_0^--\lambda)}\bigr)}{2\lambda}\\
\alpha_\lambda^-&:=z_0(\omega_\lambda^-)^{1/a}\frac{1+\omega_\lambda^-}{1+\lambda \omega_\lambda^-} , \qquad
\alpha_\lambda^+:=z_0(\omega_\lambda^+)^{1/a}\frac{1+\omega_\lambda^+}{1+\lambda \omega_\lambda^+}.
\end{align*}
Then, 
$G_s(E_s^>)=(\alpha_\lambda^-, \alpha_\lambda^+). $
\item If $\lambda\geq \lambda_0^+$, we define:
\begin{align*}
\tau_\lambda^-&:=\frac{-(a+1)+(a-1)\bigl(\lambda-\sqrt{(\lambda-1)(\lambda-\lambda_0^+)}\bigr)}{2\lambda} \\
\tau_\lambda^+&:=\frac{-(a+1)+(a-1)\bigl(\lambda+\sqrt{(\lambda-1)(\lambda-\lambda_0^+)}\bigr)}{2\lambda}\\
\beta_\lambda^-&:=z_0(\tau_\lambda^+)^{1/a}\frac{1+\tau_\lambda^+}{1+\lambda \tau_\lambda^+} , \qquad
\beta_\lambda^+:=z_0(\tau_\lambda^-)^{1/a}\frac{1+\tau_\lambda^-}{1+\lambda \tau_\lambda^-}.
\end{align*}
Then, 
$G_s(E_s^>)=(\beta_\lambda^-, \beta_\lambda^+). $
\label{Hill_sym_complete}
\end{itemize}
\end{prop}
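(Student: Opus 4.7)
The strategy is to translate the condition defining $E_s^>$ into a quadratic inequality in the variable $u=(z/z_0)^a$, then solve it explicitly and push the resulting interval of $u$-values through the monotonic map $G_s$.

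A direct computation with $f(z)=(1+\lambda(z/z_0)^a)/(1+(z/z_0)^a)$ gives
\[
\frac{zf'(z)}{f(z)}=\frac{a(\lambda-1)u}{(1+u)(1+\lambda u)},\qquad u:=(z/z_0)^a.
\]
Hence $z\in E_s^>$ is equivalent to $a|1-\lambda|u>(1+u)(1+\lambda u)$, i.e.
\[
Q_\lambda(u):=\lambda u^2+\bigl(1+\lambda-a|1-\lambda|\bigr)u+1<0.
\]
When $\lambda=0$ the polynomial degenerates to a linear inequality $(a-1)u>1$, yielding at once $E_s^>=\bigl(z_0(1/(a-1))^{1/a},+\infty\bigr)$ and, by computing $G_s$ at the left endpoint, the formula for $\alpha_0^-$.

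For $\lambda>0$, Lemma \ref{poly} applied to $Q_\lambda$ (whose product of roots is $1/\lambda>0$) shows that $E_s^>\neq\emptyset$ iff $Q_\lambda$ has two positive roots, iff its discriminant is positive and its linear coefficient is negative. The linear-coefficient condition is $a|1-\lambda|>1+\lambda$, i.e.\ $\lambda<(a-1)/(a+1)$ when $\lambda<1$ and $\lambda>(a+1)/(a-1)$ when $\lambda>1$. The heart of the computation will then be to factor the discriminant in the clean form
\[
\mathrm{disc}(Q_\lambda)=\begin{cases}(a+1)^2(1-\lambda)(\lambda_0^--\lambda)&\text{if }\lambda<1,\\ (a-1)^2(\lambda-1)(\lambda-\lambda_0^+)&\text{if }\lambda>1,\end{cases}
\]
which I would check by expanding both sides and identifying coefficients via $(a+1)^2\lambda_0^-=(a-1)^2$, $(a-1)^2\lambda_0^+=(a+1)^2$, and $(a\pm1)^2(1+\lambda_0^\mp)=2(a^2+1)$. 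Once this factorization is in hand, the discriminant condition is automatically stronger than the linear-coefficient condition, so $E_s^>=\emptyset$ exactly when $\lambda\in(\lambda_0^-,\lambda_0^+)$, and outside this range the two positive roots $\omega_\lambda^\pm$ (resp.\ $\tau_\lambda^\pm$) of $Q_\lambda$ are obtained by the quadratic formula; their expressions match those stated.

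It then remains to compute $G_s(E_s^>)$. Since $E_s^>=\{z:(z/z_0)^a\in(\omega_\lambda^-,\omega_\lambda^+)\}$ (or the analogous interval in $\tau$), and since by Proposition \ref{sym proposition} the map $G_s(z)=z(1+(z/z_0)^a)/(1+\lambda(z/z_0)^a)$ is strictly increasing when $f$ is decreasing ($\lambda<1$) and strictly decreasing when $f$ is increasing ($\lambda>1$), the image is an interval whose endpoints are $G_s$ evaluated at $z=z_0(\omega_\lambda^\pm)^{1/a}$ (resp.\ at $z_0(\tau_\lambda^\pm)^{1/a}$), giving precisely $(\alpha_\lambda^-,\alpha_\lambda^+)$ in the competitive case and $(\beta_\lambda^-,\beta_\lambda^+)$ in the cooperative case.

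The only nontrivial step is the algebraic factorization of the discriminant; once this is done, everything else is a straightforward application of the quadratic formula combined with the monotonicity of $G_s$ already established in Proposition \ref{sym proposition}.
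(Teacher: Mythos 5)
Your proposal is correct and follows essentially the same route as the paper's own proof in Appendix~\ref{AppB}: the substitution $u=(z/z_0)^a$, reduction to the sign of the quadratic $\lambda u^2+(1+\lambda-a\lvert 1-\lambda\rvert)u+1$, the factorization of its discriminant as $(1-\lambda)(a+1)^2(\lambda_0^--\lambda)$ or $(\lambda-1)(a-1)^2(\lambda-\lambda_0^+)$, and the transfer of the resulting $u$-interval through the monotonic map $G_s$ via Proposition~\ref{sym proposition}. The only cosmetic difference is that you invoke Lemma~\ref{poly} to justify positivity of the roots explicitly (which the paper's appendix proof glosses over), so no gap to report.
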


\begin{proof}

With the same reasoning as for the asymmetric system, we can assume that $z_0=1$. 
Let $x\in \mathbb{R}_+$. We denote:
$\omega:=x^a $ and 
\[ P(\omega):=(1+\lambda \omega)(1+\omega)-n\lvert \lambda-1\rvert \omega
 =\lambda \omega^2+(\lambda+1-a\lvert \lambda-1\rvert)\omega+1.
\]
Since 
\[\bigg\lvert \frac{xf'(x)}{f(x)} \bigg\rvert=\frac{a\lvert \lambda-1\rvert x^a}{(1+\lambda x^a)(1+x^a)},\]
we clearly have:
$
x\in E_s^> \iff \big\lvert\frac{xf'(x)}{f(x)}\big\rvert>1
\iff P(\omega)<0.
$ 

If $\lambda=0$, then:
$P(\omega)=1-(a-1)\omega.$
Thus,
$
P(\omega)<0 \iff \omega >\frac{1}{a-1}
\iff x>(\frac{1}{a-1})^{1/a}$.\\
Since $G_s(\bigl(\frac{1}{a-1}\bigr)^{1/a})=(\frac{1}{a-1})^{1/a}\frac{a}{a-1}:=a_0^-$, the result follows.

We now assume that $\lambda>0$. Denoting $\Delta$ the discriminant of P, we get
\[
\Delta=(\lambda+1)^2+a^2(\lambda-1)^2-2a(\lambda+1)\lvert \lambda-1\rvert -4\lambda
=\lvert \lambda-1\rvert \bigl(\lvert \lambda-1\rvert (1+a^2)-2a(\lambda+1)\bigr).
\]
We distinguish two cases:
\begin{itemize}
\item If $\lambda<1$, recalling that $\lambda_0^-=(\frac{a-1}{a+1})^2$, we have
\[
\Delta = (1-\lambda)\bigl((1-a)^2-\lambda (1+a)^2\bigr)
=-(1-\lambda)(1+a)^2(\lambda-\lambda_0^-).
\]
Hence, if $\lambda>\lambda_0$, then $P(\omega)>0$, and thus $x \notin E_s^>$.

If $\lambda \in (0,\lambda_0]$, then $P$ has two roots, and we have 
\[P(\omega)<0 \iff \omega \in (\omega_\lambda^-, \omega_\lambda^+) \iff  x \in ((\omega_\lambda^-)^{1/a}, (\omega_\lambda^+)^{1/a}).\]
Hence, 
$E_s^>=((\omega_\lambda^-)^{1/a}, (\omega_\lambda^+)^{1/a}).$
Since $G_s$ is increasing, we get 
\[
G_s(E_s)=(G_s((\omega_\lambda^-)^{1/a}), G_s(\omega_\lambda^+)^{1/a})
=(\alpha_\lambda^-, \alpha_\lambda^+).
\]
\item If $\lambda>0$, the reasoning is exactly the same. In this case
\[
\Delta = (\lambda-1)\bigl(\lambda (a-1)^2-(a+1)^2\bigr)
=(\lambda-1)(a-1)^2\bigl(\lambda-\lambda_0^+\bigr),
\]
where $\lambda_0^+ = (\frac{a+1}{a-1})^2$.
We get 
$E_s^>=((\tau_\lambda^-)^{1/a}, (\tau_\lambda^+)^{1/a})$,
and, lastly, since $G_s$ is decreasing in this case, 
\[
G_s(E_s^>)=(G(\tau_\lambda^+,), G(\tau_\lambda^-))
=(\beta_\lambda^-, \beta_\lambda^+), 
\]
which ends the proof. 
\end{itemize}

\end{proof}

\subsection{Other sigmoid functions which are $\gamma^{1/2}$-convex }
\label{AppC}
The following sigmoid functions are strictly $\gamma^{1/2}$-convex:
\begin{enumerate}[(i)]
\item The logistic function, and more generally, the general logistic functions, \textit{i.e.}, all the functions of the shape 
\[x\mapsto \biggl(\frac{1}{1+e^{-x}}\biggr)^\alpha, \quad \alpha>0.\]
\item The hyperbolic tangent: 
\[x\mapsto \frac{e^x-e^{-x}}{e^x+e^{-x}}.\]
\item The arctan function.
\item The Gudermannian function: 
\[x\mapsto \int_0^x{\frac{1}{\cosh(t)}dt}.\]
\item The error function 
\[x\mapsto \frac{2}{\sqrt{\pi}}\int_0^x{e^{-t^2}dt}.\]
\end{enumerate}

\begin{proof}
\begin{enumerate}[(i)]
\item We note that the result is immediate if we assume $\alpha\geq1$, because the function happens to be the composite of $\gamma^{1/2}$-convex functions in this case. Surprisingly, the results holds true if $\alpha<1$. 
This result is equivalent to showing that $f:x\mapsto\frac{-1}{\alpha}(1+e^x)^{-\alpha}$ is strictly $\gamma^{1/2}$-convex, for any $\alpha>0$ (the result is in fact true for any $\alpha \neq 1$). 
We indeed compute 
\[f'(x)=e^x(1+e^x)^{-(\alpha+1)}, \quad f''(x)=\frac{1-\alpha e^x}{1+e^x}f'(x), \quad f^{(3)}(x)=\frac{\alpha e^{2x}-(3\alpha+1)e^x+1}{(1+e^x)^2}f'(x)^2, \]
and find
\[f^{(3)}(x)f'(x)-\frac{3}{2}f''(x)^2=-\frac{f'(x)^2}{2(1+e^x)}(\alpha^2e^{2x}+2e^x+1)<0.\]
\item We rewrite
\[\frac{e^x-e^{-x}}{e^x+e^{-x}}= \frac{(e^{x})^2-1}{(e^x)^2+1}.\]
The hyberbolic tangent is thus the composite function of the  exponential function, the square function and a homographic function, which proves that it is strictly $\gamma^{1/2}$-convex as the composition of $\gamma^{1/2}$-convex functions, one of which is strictly $\gamma^{1/2}$-convex. 
\item It is equivalent to proving that the tangent function is strictly $\gamma^{1/2}$-concave on $(-\pi/2, \pi/2)$. Since $\tan'=\frac{1}{\cos^2}$, the result follows according to the positivity and the concavity of the cosine function on this interval. 
\item Since this function may be rewritten  as
\[x\mapsto \int_0^x{\frac{1}{\cosh(t)}dt}=2\arctan(\tanh(\frac{x}{2})), \] 
this result is true according to the $\gamma^{1/2}$-convexity of arctan and tanh. 
\item The result is obvious. 
\end{enumerate}
\end{proof}


\bibliographystyle{abbrv}
\bibliography{Biblio_article}

\end{document}